\title{Dynamic Mode Decomposition: Theory and Data Reconstruction}
    \author{Tim Krake\footnote{University of Stuttgart and Hochschule der Medien, Germany (tim.krake@visus.uni-stuttgart.de)} , Daniel Weiskopf\footnote{University of Stuttgart, Germany (weiskopf@visus.uni-stuttgart.de)} , Bernhard Eberhardt\footnote{Hochschule der Medien, Germany (eberhardt@hdm-stuttgart.de)}
}
\date{\today}
\newtheorem{definition}{Definition}[section]
\newtheorem{theorem}[definition]{Theorem}
\newtheorem{corollary}[definition]{Corollary} 
\newtheorem{lemma}[definition]{Lemma} 
\newtheorem{proposition}[definition]{Proposition}
\newtheorem{algorithm}[definition]{Algorithm}
\newtheorem{remark}[definition]{Remark}
\theoremstyle{nonumberplain}
\newtheorem{proof}{Proof}		
\newcommand{\rank}{\textnormal{rank}}
\newcommand{\im}{\textnormal{im}}
\renewcommand{\ker}{\textnormal{ker}}
\renewcommand{\dim}{\textnormal{dim}}
\newcommand{\diag}{\textnormal{diag}}
\newcommand{\Vand}{\textnormal{Vand}}
\begin{document}

\maketitle

\begin{abstract} \noindent
Dynamic Mode Decomposition (DMD) is a data-driven decomposition technique extracting spatio-temporal patterns of time-dependent phenomena. 
In this paper, we perform a comprehensive theoretical analysis of various variants of DMD.
We provide a systematic advancement of these and examine the interrelations. 
In addition, several results of each variant are proven. Our main result is the exact reconstruction property.
To this end, a new modification of scaling factors is presented and a new concept of an error scaling is introduced to guarantee an error-free reconstruction of the data. 
\\

\noindent
{\bf Keywords.} Dynamic Mode Decomposition, Data Reconstruction, Numerical Analysis, Matrix Decomposition
\end{abstract}

\section{Introduction}
The analysis of time-dependent phenomena is at the heart of investigation in a broad range of scientific research.
Within these studies, the integration of data in the form of time-series has increased considerably.
Therefore, the application of innovative algorithms is necessary to gain deep insights into the characteristics of data.
In this paper, we address time-series analysis by Dynamic Mode Decomposition (DMD), which was first introduced by Schmid and Sesterhenn in 2008.

DMD is a data-driven and model-free algorithm extracting spatio-temporal patterns in the form of so-called DMD modes and DMD eigenvalues.
As an efficient tool in fluid mechanics, DMD has gained much attention.
DMD has been investigated on both practical and theoretical grounds.
Nonetheless, the focus of these analyses was mainly a practical one.
For example, various types of flow were considered, such as airflow around an airfoil, fuel flow in a combustion chamber, or heat conduction in various cases. Completely different fields of application comprise financial trading, video processing, epidemiology, neuroscience, and control theory.

In contrast, we focus on theoretical investigations. The paper is thus structured as follows:
After discussing related work, we introduce the theoretical framework of DMD dealing with the background mechanisms.
In this process, we define the so-called system matrix, which is pioneering for DMD and prove the following results: a characterization for the exactness and diagonalizability of the system matrix as well as the resulting reconstruction of data with its spectral components.
These theorems are central for the following sections introducing the three common variants of DMD: The original formulation~\cite{schmid:2008:APS}, the modification by a singular value decomposition~\cite{schmid:2011:applications_of_dmd}, and Exact-Dynamic Mode Decomposition~\cite{tu:2014:on_dmd_theorey_and_app}.
In this context, a systematic advancement will be presented that clarifies precisely the interrelation of these algorithms.
This especially includes algebraic identities as well as spectral-theoretic results leading, e.g., to a new approach for the extension to the most recent variant of DMD.
In addition, the exact reconstruction property of DMD will be proven for each DMD variant that guarantees an error-free reconstruction of the data.
To this end, a new variant of scaling factors is introduced involving a new concept of an error scaling for the reconstruction of the first snapshot.
Some concluding remarks will be given in the last section.

\section{Related Work}
Rowley et al. provided a first theoretical investigation~\cite{rowley:2009:spectral_nonlinear_flow} for the fundamental version of DMD, here denoted as Companion Dynamic Mode Decomposition (CDMD).
They dealt with the reconstruction property of CDMD, however, they did not take appropriate scaling factors into account.
An algorithmic improvement through the singular value decomposition was achieved by Schmid resulting in another variant of DMD~\cite{schmid:2010:dmd_numerical_data}.
We refer to this algorithm as Singular Value Decomposition Dynamic Mode Decomposition (SDMD).
The reconstruction property of SDMD was mentioned by Chen et al. as well as further properties of CDMD~\cite{chen:2012:variants_of_DMD}.
Tu et al. introduced the advancement of SDMD to Exact-Dynamic Mode Decomposition (EXDMD)~\cite{tu:2014:on_dmd_theorey_and_app}, which is the most recent version of DMD.
They prove basic algebraic identities and show primarily spectral-theoretic connections between these two algorithms.
We generalize and extend all results or derive them as a corollary.
In addition, we present a new approach for the extension of SDMD to EXDMD characterizing precisely the connection between these two algorithms. 
Despite these theoretical investigations, the problem of an exact reconstruction is left open.
However, Jovanovic et al.~\cite{jovanovic:2014:sparsity_promoting_dmd} as well as Drma{\v{c}} et al.~\cite{drmavc:2018:data_driven_koopman_spectral_analysis_in_vandermonde_cauchy_form} discussed efficient techniques for finding appropriate coefficients by solving certain (convex) minimization problems.
We introduce a new variant of scaling factors that lead to an error-free reconstruction of the snapshots under appropriate conditions.

\section{Theoretical Framework}
This section is dedicated to the basic theoretical background of DMD.
In this context, the general setting will be presented as well as an intuitive interpretation of the principles of DMD.
These are crucial for the precise understanding of DMD, forming the basis for the subsequent sections. 
In addition, basic notation will be formalized and consistently used in this paper.

The application of DMD starts with the availability of data that may stem either from empirical experiments or numerical simulations alike.
The objective of DMD is to extract spatio-temporal patterns out of the data in the form of DMD modes, eigenvalues ,and amplitudes.
As the modes are related to spatial structures, the corresponding eigenvalues determine the temporal behavior of these.
The amplitudes characterize the impact of individual modes on the whole system, i.e. the dominance structure. 

Now, consider data (snapshots) $x_0,x_1,\dots,x_m \in \mathbb{C}^n$ with the following two quantities:
\begin{align*}
n &= \text{``size/dimension of the data points''}, \\
m &= \text{``number of data points''}.
\end{align*}
In the application areas of DMD such as fluid dynamics or non-linear dynamics, the connection between these variables is typically given by $n \gg m$, which means that the size of the data points is considerable larger than the number of snapshots.
In this context, typical values are $n \approx 10^6$--$10^{12}$ (depending on whether we address 2D or 3D scenarios) and $m \approx 100$--$1000$.
This basic setting is crucial for understanding the principles of DMD and will be assumed in the following derivation.
However, DMD can also be mathematically formulated and applied without this assumption, as we will see later.

In short, DMD calculates the relevant dynamic information of a high-dimensional linear operator that connects the given data points $x_0,x_1,\dots,x_m $ in a least square sense, without explicitly computing it.
This is achieved by an eigenvalue decomposition of a low-dimensional representation.
The corresponding eigenvectors will be embedded as DMD modes into the high-dimensional space endowed with appropriate scaling factors, the DMD amplitudes.

In order to obtain the high-dimensional matrix $A \in \mathbb{C}^{n \times n}$ connecting the data points, we consider the following (least-squares) minimization problem:
\begin{equation*}
\min_{A \in \mathbb{C}^{n \times n}} \sum_{j=0}^{m-1} \lVert A x_j - x_{j+1} \rVert_2^2.
\end{equation*}
Note that the high dimensionality stems from the fact $n \gg m$.
An explicit solution of $A$ is necessary to formulate an algorithmic approach.
To this end, we rewrite the data into the matrices
\begin{equation*}
X = \begin{bmatrix}
|	& |		& 		& |			\\ 
x_0 &  x_1 	& \dots & x_{m-1}	\\
|	& |		& 		& |			\\ 
\end{bmatrix}
,
\quad
Y = \begin{bmatrix}
|	& |		& 		& |			\\ 
x_1 &  x_2 	& \dots & x_{m}		\\
|	& |		& 		& |			\\ 
\end{bmatrix} \in \mathbb{C}^{n \times m}
\end{equation*}
obtaining the following equivalent minimization problem:
\begin{equation*}
\min_{A \in \mathbb{C}^{n \times n}} \lVert A X - Y \rVert_F^2,
\end{equation*}
where $\lVert \cdot \rVert_F$ denotes the Frobenius norm of a matrix. An explicit solution is now given by
\begin{equation*}
A = Y X^+ \in \mathbb{C}^{n \times n},
\end{equation*}
where $X^+$ denotes the Moore-Penrose pseudoinverse~\cite{moore:1920:moore_penrose} of $X$. 
Since the pseudoinverse always exists, the solution $A$ can be used for an algorithmic formulation.

Now, assuming the diagonalizability of the matrix $A$, i.e., $A = V \Lambda V^{-1}$ with the matrices $\Lambda = \diag(\lambda_1,\dots,\lambda_n)$ and $V = \begin{bmatrix} v_1 & v_2 & \dots & v_n \end{bmatrix}$ containing the eigenvalues and eigenvectors respectively, we obtain the characteristic reconstruction property of the matrix $A$ by
\begin{equation*}
x_k 
\approx A^k x_0 
= (V \Lambda V^{-1})^k x_0
= V \Lambda^k V^{-1} x_0
= V \Lambda^k b
= \sum_{j=1}^n \lambda_j^k b_j v_j
\end{equation*}
for $k = 0,1,\dots,m$, where $b=(b_1,\dots,b_n)^T$ are the coefficients of the linear combination of $x_0$ in the eigenvector basis, i.e. $b = V^{-1} x_0$.
Since the rank of $A$ is at most $\min\{\rank(X),\rank(Y)\}$ and consequently not more than $m$, there are at least $n-m$ eigenvalues of $A$ that are equal to zero.
The dynamic behavior will be thus captured by at most $m$ components, which are considerable fewer components.
Consequently, we obtain the following reconstruction of the data:
\begin{equation*} 
x_0 \approx \sum_{j=1}^m b_j v_j + q_0
\qquad
x_k \approx \sum_{j=1}^m \lambda_j^k b_j v_j,
\end{equation*}
for $k = 1,\dots,m$, where $q_0$ is the resulting error arising from the missing $m-n$ components.
In sum, we gain a reasonable low-dimensional decomposition of the data into the triples $(\lambda_j, v_j, b_j) \in \mathbb{C} \times \mathbb{C}^n \times \mathbb{C}$, providing an instrument for diagnostic approaches as well as a tool for prediction, long-term analysis, and stability analysis. 

The different versions of DMD presented in the subsequent sections are based on various techniques to produce a low-dimensional representation of the matrix $A$ in order to (approximately) compute its eigenvalues and eigenvectors as well as new appropriate scaling factors. 
These procedures yield similar triples that will be denoted by $(\lambda_j, \vartheta_j, a_j) \in \mathbb{C} \times \mathbb{C}^n \times \mathbb{C}$ throughout the paper. 
These triples consist of the so-called DMD eigenvalues, DMD modes, and DMD amplitudes corresponding to a particular algorithm (see each section).

Before studying the variants of DMD, we first concentrate on an analysis of the high-dimensional structures involving the matrix $A$.
Through a deeper understanding of the matrix $A$ representing the starting point of DMD, we obtain insights into the desired action of DMD. 
In particular, the success of an error-free reconstruction of DMD depends on the following two aspects:
\begin{enumerate}
\item The exactness of the matrix $A$, i.e., $A X = Y$.
\item The diagonalizability of the matrix $A$.
\end{enumerate}
These two aspects will be examined throughout this section, however, before, the matrix $A$ will be captured in the following definition.

\begin{definition} For data $x_0,\dots,x_m \in \mathbb{C}^n$ with associated matrices $X = \begin{bmatrix} x_0 & \dots & x_{m-1} \end{bmatrix}$ and $Y = \begin{bmatrix} x_1 & \dots & x_m \end{bmatrix}$, we call the matrix $A = Y X^+ \in \mathbb{C}^{n \times n}$ the system matrix to the data $x_0,\dots,x_m$.
\end{definition}

First, we recall some well-known facts. 
The definition is well-defined, as the pseudoinverse always exists and is unique.
Furthermore, the system matrix is the unique solution to the the minimization problem $\min_{A \in \mathbb{C}^{n \times n}} \lVert A X - Y \rVert _F^2$, if the rows of the matrix $X$ are linearly independent (or equivalently, if the matrix $X$ is surjective).

An important condition is the exactness of the system matrix, i.e., the equality $AX =Y$ or equivalently $A x_j = x_{j+1}$ for $j = 0,\dots,m-1$. The following proposition characterizes this property on linear functionals.

\begin{proposition}
Let the system matrix $A$ be given to the data $x_0,\dots,x_m \in \mathbb{C}^n$ as well as an arbitrary vector $w \in \mathbb{C}^{n}$. Then the following statements are equivalent:
\begin{enumerate}
\item[(i)] $\ker(X) \subseteq \ker(w^*Y)$, i.e., $Xz = 0 \implies w^*Yz = 0$ for all $z \in \mathbb{C}^n$.
\item[(ii)] The system matrix is exact on $w$, i.e. $w^*A X=w^*Y$.
\end{enumerate}
\end{proposition}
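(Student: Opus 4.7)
The plan is to rewrite condition (ii) in terms of the complementary projection $I - X^+X$ and then use the standard fact that this matrix is precisely the orthogonal projector onto $\ker(X)$. Concretely, since $A = YX^+$, condition (ii) reads $w^*YX^+X = w^*Y$, or equivalently
\begin{equation*}
w^*Y\,(I - X^+X) = 0.
\end{equation*}
So the whole proposition collapses to showing that this identity is equivalent to $w^*Y$ annihilating $\ker(X)$.

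For the direction (ii) $\Rightarrow$ (i), I would take any $z \in \ker(X)$ (I think $z \in \mathbb{C}^m$ is meant, since $X$ has $m$ columns) and use the pseudoinverse identity $X^+Xz = 0$, which follows because $X^+X$ is the orthogonal projector onto $\ker(X)^\perp$. Then $(I - X^+X)z = z$, and applying $w^*Y(I - X^+X) = 0$ to $z$ gives $w^*Yz = 0$.

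For the converse (i) $\Rightarrow$ (ii), I would note that for any $u \in \mathbb{C}^m$, the vector $(I - X^+X)u$ lies in $\ker(X)$; this is immediate from $X(I - X^+X) = X - XX^+X = X - X = 0$ by the defining pseudoinverse identity $XX^+X = X$. Hypothesis (i) then yields $w^*Y(I - X^+X)u = 0$ for every $u$, hence $w^*Y(I - X^+X) = 0$, which is (ii).

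There is no serious obstacle here; the only subtlety is keeping track of which space the kernel lives in (columns of $X$ means $\ker(X) \subseteq \mathbb{C}^m$, so the $\mathbb{C}^n$ in the proposition statement looks like a typo) and invoking the two pseudoinverse identities $XX^+X = X$ and $X^+X = (X^+X)^*$ that underpin the projector interpretation. Everything else is one-line algebra.
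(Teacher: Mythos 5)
Your proof is correct and follows essentially the same route as the paper: both directions reduce (ii) to the identity $w^*Y(I-X^+X)=0$ and exploit that $I-X^+X$ projects onto $\ker(X)$ (the paper's reverse direction skips the projector and just multiplies $Xz=0$ by $w^*A$, but this is an immaterial difference). Your side remark is also right that $\ker(X)\subseteq\mathbb{C}^m$, so the ``$z\in\mathbb{C}^n$'' in the statement is a typo.
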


\begin{proof} ``$(i) \implies (ii)$''. Consider the following equation
\begin{equation*}
w^*Y-w^*A X = w^*Y - w^*Y X^+ X = w^*Y( I - P_{X^*}),
\end{equation*}
where $P_{X^*}$ is the orthogonal projection onto the image of $X^*$. 
Therefore $I - P_{X^*}$ is the orthogonal projection onto the kernel of $X$ and consequently the assertion follows by the assumption $\ker(X) \subseteq \ker(w^*Y)$.

``$(ii) \implies (i)$''. Let $z \in \ker(X)$, i.e., $Xz = 0$. This implies $w^* A X z = 0$, which is by assumption equivalent to $w^* Y z = 0$. Hence $z \in \ker(w^*Y)$.
\end{proof}

A simple consequence of this proposition is the following corollary~\cite[Theorem 2]{tu:2014:on_dmd_theorey_and_app}.

\begin{corollary} \label{2:Corollary_AX=Y}
Let the system matrix $A$ be given to the data $x_0,\dots,x_m \in \mathbb{C}^n$. Then the following assertions are equivalent:
\begin{enumerate}
\item[(i)] $\ker(X) \subseteq \ker(Y)$.
\item[(ii)] The system matrix is exact, i.e. $A X=Y$.
\end{enumerate}
\end{corollary}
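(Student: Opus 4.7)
The plan is to derive the corollary from the preceding proposition by reducing the row-by-row condition to the full-matrix condition. The key observation is that the kernel of $Y$ is the intersection of the kernels of all its rows, i.e., $\ker(Y) = \bigcap_{i=1}^{n} \ker(e_i^* Y)$, where $e_1,\dots,e_n$ denote the standard basis vectors of $\mathbb{C}^n$. Similarly, the matrix identity $AX = Y$ holds if and only if $e_i^* AX = e_i^* Y$ for every $i$. Thus applying the proposition with $w = e_i$ for each $i$ should immediately yield both implications.

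For the direction $(i) \Rightarrow (ii)$, I would first note that $\ker(X) \subseteq \ker(Y)$ trivially implies $\ker(X) \subseteq \ker(e_i^* Y)$ for every $i$, since $\ker(Y) \subseteq \ker(e_i^* Y)$. The proposition, applied with $w = e_i$, then gives $e_i^* AX = e_i^* Y$ for every $i$, and assembling these rows yields $AX = Y$.

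For $(ii) \Rightarrow (i)$, I would argue that $AX = Y$ implies $e_i^* AX = e_i^* Y$ for each $i$, whence the proposition gives $\ker(X) \subseteq \ker(e_i^* Y)$ for each $i$. Intersecting over $i$ then gives $\ker(X) \subseteq \bigcap_{i=1}^n \ker(e_i^* Y) = \ker(Y)$.

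There is really no main obstacle here, since both implications are essentially bookkeeping applications of the previous proposition. The only point that must be made explicit is the row-wise characterization of $\ker(Y)$ and of the equality $AX = Y$; everything else is immediate. Alternatively, one could bypass the proposition entirely and give a direct one-line argument via the identity $Y - AX = Y(I - P_{X^*})$ already established in its proof, using that $I - P_{X^*}$ is the orthogonal projection onto $\ker(X)$.
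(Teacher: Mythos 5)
Your proposal is correct and follows exactly the route the paper intends: the corollary is stated as ``a simple consequence'' of the preceding proposition, and specializing $w$ to each standard basis vector $e_i$ and assembling the rows is precisely the bookkeeping that makes this work. The paper omits these details, but your argument (including the intersection characterization $\ker(Y)=\bigcap_i \ker(e_i^*Y)$) fills them in correctly.
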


In the case of linear independent data points $x_0,\dots,x_{m-1}$, the system matrix is exact, since $\ker(X) = \{0\}$ and hence condition $(i)$ of \cref{2:Corollary_AX=Y} is trivially satisfied.
In this context, the condition $n \gg m$ (which is typical for the application areas of DMD) suggests the linear independence of the data.
Consequently, the first aspect (exactness of the system matrix) is characterized.
For a full reconstruction of the data, however, we still need the diagonalizability of the system matrix.
To this end, we examine the inner structure of the system matrix, i.e., its kernel and image. 

\begin{lemma} \label{2:Lemma_DimImKerA}
Let the system matrix $A$ be given to the data $x_0,\dots,x_m \in \mathbb{C}^n$, where $\allowbreak x_0, \allowbreak \dots,\allowbreak x_{m-1}$ and $x_1,\dots,x_m$ are linear independent, respectively. Then the dimension of the image and the kernel of $A$ is given by
\begin{equation*}
\dim~\im(A)=m, 
\qquad
\dim~\ker(A)=n-m.
\end{equation*}
\end{lemma}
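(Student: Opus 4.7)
The plan is to compute $\dim\im(A)$ first and then obtain $\dim\ker(A)$ from the rank-nullity theorem. The key observation is that the hypothesis ``the columns $x_0,\dots,x_{m-1}$ are linearly independent'' means $X$ has full column rank $m$, which in turn implies that the Moore-Penrose pseudoinverse behaves as a genuine left inverse: $X^+ X = I_m$. This is the crucial algebraic fact that will let us simplify $A = YX^+$.

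From $X^+X = I_m$ I would deduce the two-sided inclusion $\im(A) = \im(Y)$. The inclusion $\im(YX^+) \subseteq \im(Y)$ is immediate since every vector $YX^+v$ lies in $\im(Y)$. For the reverse inclusion, pick any $y \in \im(Y)$, write $y = Yc$ for some $c \in \mathbb{C}^m$, and note that $y = Yc = Y(X^+X)c = YX^+(Xc) = A(Xc) \in \im(A)$. Hence $\im(A) = \im(Y)$, and the second hypothesis (linear independence of $x_1,\dots,x_m$, i.e.\ $Y$ has full column rank) gives $\dim\im(A) = \dim\im(Y) = m$.

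The kernel part then follows immediately from the rank-nullity theorem applied to $A : \mathbb{C}^n \to \mathbb{C}^n$, yielding $\dim\ker(A) = n - \dim\im(A) = n - m$.

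I do not anticipate a real obstacle here; the proof hinges entirely on recognising the left-inverse identity $X^+X = I_m$ under the full-column-rank hypothesis. The only mild subtlety worth flagging in the write-up is justifying this identity from the defining properties of the Moore-Penrose pseudoinverse (or alternatively from the explicit formula $X^+ = (X^*X)^{-1}X^*$, which is available precisely because $X$ has full column rank so $X^*X$ is invertible). Everything else is a one-line set-theoretic argument plus rank-nullity.
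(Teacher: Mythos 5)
Your proof is correct, and it takes a genuinely different route from the paper's. The paper sandwiches $\rank(YX^+)$ using Sylvester's rank inequality,
\begin{equation*}
m = \rank(Y) + \rank(X^+) - m \leq \rank(YX^+) \leq \min\{\rank(Y),\rank(X^+)\} = m,
\end{equation*}
where both outer bounds use the full-column-rank hypotheses on $X$ and $Y$; you instead exploit the left-inverse identity $X^+X = I_m$ to prove the sharper set-theoretic statement $\im(A) = \im(Y)$, from which $\rank(A) = \rank(Y) = m$ is immediate. Both arguments finish identically with rank--nullity. What your route buys is that it is more elementary (no appeal to Sylvester's inequality, only the defining property of the pseudoinverse under full column rank) and it delivers strictly more information: the exact identification $\im(A) = \im(Y)$, a fact the paper in effect needs again later (e.g.\ the inclusion $\im(A) \subseteq \im(Y)$ in the proof of the EXDMD reconstruction theorem). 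What the paper's route buys is brevity --- a two-line display once the inequality is quoted --- and it generalizes more gracefully to situations where one only has rank bounds rather than an exact left inverse. Your flagged subtlety (justifying $X^+X = I_m$ via $X^+ = (X^*X)^{-1}X^*$) is exactly the right thing to make explicit in the write-up.
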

\begin{proof}
For the first assertion, we use the following rank inequality:
\begin{equation*}
m  = \rank(Y) + \rank(X^+) - m \leq \rank(Y X^+) \leq \min\{\rank(Y),\rank(X^+)\} = m,
\end{equation*}
which implies $\rank(A) = m$ or equivalently $\dim~\im (A) = m$.
Consequently, we obtain that $\dim~\ker (A) = n - \dim~\im (A) = n-m$. 
\end{proof}

\begin{corollary} \label{2:Corollary_DiagoA}
Let the system matrix $A$ to the data $x_0,\dots,x_m \in \mathbb{C}^n$ be given, where $x_0, \dots, x_{m-1}$ and $x_1,\dots,x_m$ are linear independent, respectively. If $A$ has $m$ non-zero distinct eigenvalues, then it is diagonalizable.
\end{corollary}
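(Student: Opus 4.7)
The plan is to count linearly independent eigenvectors and show they sum to $n$, which is equivalent to diagonalizability. The ingredients are already on the table: \cref{2:Lemma_DimImKerA} tells us the geometric multiplicity of the eigenvalue $0$, and standard spectral theory handles the rest.

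First, I would invoke \cref{2:Lemma_DimImKerA} under the given linear independence hypotheses to conclude $\dim \ker(A) = n-m$. Since $0$ is an eigenvalue of $A$ precisely when $\ker(A) \neq \{0\}$ (which holds as soon as $n > m$; the case $n = m$ being trivial because then $A$ would have $m = n$ non-zero distinct eigenvalues outright), this gives $n-m$ linearly independent eigenvectors $u_1,\dots,u_{n-m}$ for the eigenvalue $0$.

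Next, let $\lambda_1,\dots,\lambda_m$ denote the $m$ non-zero distinct eigenvalues with corresponding eigenvectors $v_1,\dots,v_m$. By the classical fact that eigenvectors to distinct eigenvalues are linearly independent, the vectors $v_1,\dots,v_m$ are linearly independent. Together with $0$, the collection of eigenvalues $\{0, \lambda_1,\dots,\lambda_m\}$ consists of $m+1$ pairwise distinct scalars, so the union $u_1,\dots,u_{n-m},v_1,\dots,v_m$ of eigenvectors from distinct eigenspaces is again linearly independent. This produces $n$ linearly independent eigenvectors of $A$, hence $A$ is diagonalizable.

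I do not foresee a genuine obstacle here; the only point requiring a sentence of care is the independence across the zero and non-zero eigenspaces, which is immediate from the standard eigenvector independence lemma applied to the distinct eigenvalues $0,\lambda_1,\dots,\lambda_m$. The proof is essentially a direct corollary of \cref{2:Lemma_DimImKerA} combined with this well-known spectral fact.
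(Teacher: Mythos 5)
Your argument is correct and is precisely the route the paper intends: the corollary is stated without an explicit proof because it is meant to follow immediately from \cref{2:Lemma_DimImKerA} (giving $\dim \ker(A) = n-m$, i.e.\ $n-m$ independent eigenvectors for the eigenvalue $0$) combined with the standard fact that eigenvectors belonging to the $m+1$ distinct eigenvalues $0,\lambda_1,\dots,\lambda_m$ are jointly linearly independent, yielding $n$ independent eigenvectors in total. Your handling of the degenerate case $n=m$ is a small but welcome extra bit of care.
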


Regarding the reconstruction property of the system matrix, the final result will be stated in the following theorem, which deals with both sufficient and necessary conditions.
For a simple notational handling of the proof, we define the Vandermonde matrix.

\begin{definition} For $\lambda_1,\dots,\lambda_m \in \mathbb{C}$ we define the $k$-$K$-Vandermonde matrix by
\begin{equation*}
\Vand(\lambda_1,\dots,\lambda_m;k,K) =
\begin{pmatrix}
\lambda_1^{k-1} & \lambda_1^k  & \cdots & \lambda_{1}^{K-1} \\
\lambda_2^{k-1} & \lambda_2^k  & \cdots & \lambda_{2}^{K-1} \\
\vdots   & \vdots & \ddots  & \vdots \\
\lambda_m^{k-1} & \lambda_m^k  &  \hdots   & \lambda_{m}^{K-1} 
\end{pmatrix}
\in \mathbb{C}^{m \times (K-(k-1))}.
\end{equation*}
with $\Vand(\lambda_1,\dots,\lambda_m) = \Vand(\lambda_1,\dots,\lambda_m;1,m)$ for the usual Vandermonde matrix.
\end{definition}

\begin{theorem}[Reconstruction-property system matrix] \label{2:Theorem_Reconst_System_matrix_A} 
Let the system matrix $A$ be given to the data $x_0,\dots,x_m \in \mathbb{C}^n$.
Then the two following assertions are equivalent:
\begin{enumerate}
\item[(i)] The system matrix $A$ has the following properties:
\begin{itemize}
\item[a)] $\ker(X) \subseteq \ker(Y)$.
\item[b)] $A$ is diagonalizable, where the non-zero eigenvalues are distinct.
\item[c)] $\rank(Y) = r_1$.
\end{itemize} 
\item[(ii)] There are distinct numbers $0 \neq \lambda_1,\dots,\lambda_{r_1} \in \mathbb{C}$, coefficients $0 \neq b_1,\dots,b_{r_2} \in \mathbb{C}$ and linearly independent vectors $v_1,\dots,v_{r_2}$ with $r_1 \leq r_2 \leq n$ and $r_1 \leq m$ such that the following identities hold for $k = 1,\dots,m$:
\begin{equation*} 
x_0 = \sum_{j=1}^{r_2} b_j v_j,
\qquad
x_k = \sum_{j=1}^{r_1} \lambda_j^k b_j v_j.
\end{equation*}
\end{enumerate}
In this case, the non-zero distinct numbers $\lambda_1,\dots,\lambda_{r_1}$ are the eigenvalues of the system matrix and the related (scaled) vectors $v_1,\dots,v_{r_1}$ are the corresponding eigenvectors. The remaining vectors $v_{r_1 + 1},\dots, v_{r_2}$ are eigenvectors of the system matrix to the eigenvalue zero.
\end{theorem}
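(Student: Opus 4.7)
The plan is to prove the equivalence in both directions, using diagonalization of $A$ and a Vandermonde-invertibility argument as the twin tools; the spectral interpretation of the triples $(\lambda_j, v_j, b_j)$ then falls out of the construction.

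For (i) $\Rightarrow$ (ii), I first invoke \cref{2:Corollary_AX=Y} to convert (a) into $AX = Y$, whence $x_k = A^k x_0$ for all $k$. Diagonalizing $A = V \Lambda V^{-1}$ via (b), the rank chain $r_1 = \rank(Y) = \rank(AX) \leq \rank(A) \leq \rank(Y X^+) \leq \rank(Y) = r_1$ pins down $\rank(A) = r_1$, so $A$ has exactly $r_1$ non-zero eigenvalues $\lambda_1, \dots, \lambda_{r_1}$ (distinct by (b)) with eigenvectors $v_1, \dots, v_{r_1}$; the remaining eigenvectors span $\ker(A)$. Expanding $x_0 = \sum_j c_j v_j$ gives $x_k = \sum_{j \leq r_1} c_j \lambda_j^k v_j$ for $k \geq 1$. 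The factorization $Y = [v_1,\dots,v_{r_1}]\, \diag(c_j)\, \Vand(\lambda_1,\dots,\lambda_{r_1};2,m+1)$ together with (c) forces $c_j \neq 0$ for $j \leq r_1$. Letting $r_2 = r_1 + \#\{ j > r_1 : c_j \neq 0 \}$, reordering, and setting $b_j = c_j$ delivers (ii).

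For the reverse direction, conditions (c) and (a) are direct. For (c), the factorization $Y = V_1 B_1 \Vand(\lambda_1,\dots,\lambda_{r_1};2,m+1)$ with $V_1 = [v_1,\dots,v_{r_1}]$ and $B_1 = \diag(b_1,\dots,b_{r_1})$ exhibits three rank-$r_1$ factors (using linear independence of the $v_j$, $b_j \neq 0$, and the fact that distinct non-zero $\lambda_j$ with $r_1 \leq m$ yield a full-row-rank Vandermonde). For (a), taking $z \in \ker(X)$ and expanding $Xz = 0$ in the $v_j$-basis yields, by linear independence of $v_1,\dots,v_{r_2}$, the identities $\sum_{k=0}^{m-1} z_k \lambda_j^k = 0$ for $j \leq r_1$ (and, whenever $r_2 > r_1$, also $z_0 = 0$); substituting these back into $Yz = \sum_{j \leq r_1} b_j \lambda_j \bigl[\sum_k z_k \lambda_j^k\bigr] v_j$ shows $Yz = 0$. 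The core of the reverse direction is (b). From (a) and \cref{2:Corollary_AX=Y} we have $AX = Y$, hence $\rank(A) = r_1$ and $\im(A) = W := \text{span}(v_1,\dots,v_{r_1})$, an $A$-invariant subspace. Applying $A$ to $x_k = \sum_{j \leq r_1} \lambda_j^k b_j v_j$ and using $Ax_k = x_{k+1}$ gives $\sum_{j \leq r_1} \lambda_j^k b_j (Av_j - \lambda_j v_j) = 0$ for $k = 1,\dots,m-1$. Treating the vectors $w_j := b_j(Av_j - \lambda_j v_j)$ as unknowns, the coefficient matrix is a truncated Vandermonde of size $(m-1)\times r_1$; as long as $r_1 \leq m-1$ it has full column rank, forcing $Av_j = \lambda_j v_j$ for each $j \leq r_1$. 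An analogous argument built from $Ax_0 = x_1$ then yields $\sum_{j > r_1} b_j Av_j = 0$, from which $Av_j = 0$ for $j > r_1$; completing $v_{r_1+1},\dots,v_{r_2}$ to a basis of $\ker(A)$ (dimension $n-r_1$) produces the diagonalization demanded by (b), with spectrum $\{\lambda_1,\dots,\lambda_{r_1},0\}$.

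The main obstacle I anticipate is the extremal case $r_1 = m$: then the Vandermonde system from $Ax_k = x_{k+1}$ only ranges over $k = 1,\dots,m-1$, giving $r_1 - 1$ equations—one short of the $r_1$ needed to isolate each $w_j$. When additionally $r_2 = r_1$ (so $x_0 \in W$), the equation from $Ax_0 = x_1$ supplies the missing $k=0$ row and the argument goes through on a shifted basis containing $x_0$. The genuinely delicate subcase is $r_1 = m$ together with $r_2 > r_1$, where the $Ax_0 = x_1$ equation couples the $Av_j$ for $j \leq r_1$ with those for $j > r_1$ and so does not decouple cleanly; here I expect a refined analysis via the decomposition $\mathbb{C}^n = \im(X) \oplus \ker(A)$ and the induced isomorphism $A|_{\im(X)} : \im(X) \xrightarrow{\sim} W$ is needed to complete the spectral identification.
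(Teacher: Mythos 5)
Your forward direction and most of the reverse direction track the paper closely and are sound: (c) via the rank factorization of $Y$ is the paper's argument; your direct kernel computation for (a) is a legitimate, more elementary replacement for the paper's construction of the explicit exact operator $A_* = W_{r_1} \Lambda_{r_1 \times r_2} W_{r_2}^+$ followed by the minimization argument; and your full-column-rank Vandermonde argument for $Av_j = \lambda_j v_j$ when $r_1 \leq m-1$ is just the transpose of the paper's right-inverse argument with $\tilde{M}_{r_1} = \Vand(\lambda_1,\dots,\lambda_{r_1};2,m)$. One local error: from the single vector equation $\sum_{j>r_1} b_j A v_j = 0$ you cannot conclude $Av_j = 0$ for each $j > r_1$; it only gives $q := \sum_{j>r_1} b_j v_j \in \ker(A)$. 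This does not hurt assertion (b) itself, since once $Av_j = \lambda_j v_j$ holds for $j \leq r_1$, rank--nullity ($\rank(A)=r_1$, so $\dim\ker(A)=n-r_1$) already supplies $n$ independent eigenvectors; but you should not route the diagonalization through that false decoupling, and the individual $v_j$ with $j>r_1$ need not lie in $\ker(A)$ at all.

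The genuine gap is the subcase $r_1 = m$, $r_2 > r_1$, which you explicitly leave as a plan rather than a proof; this is precisely the paper's ``2.\ case,'' where it uses $\ker(X)=\{0\}$, \cref{2:Lemma_DimImKerA}, $\im(A)=\im(Y)=\langle v_1,\dots,v_m\rangle$, the decomposition $X = W_{r_1}K_{r_1}M_{r_1} + q e_1^T$ with $Aq=0$, and right-multiplication by $M_{r_1}^+$. Be warned, however, that your intended target in this subcase --- identifying the \emph{given} $\lambda_j, v_j$ as eigendata of $A$ --- is not actually attainable, and the same difficulty infects the paper's step $Aq=0$. Take $n=3$, $m=1$, $v_1=e_1$, $v_2=e_2$, $b_1=b_2=1$, $\lambda_1=1$, so $x_0=(1,1,0)^T$, $x_1=(1,0,0)^T$: condition (ii) holds with $r_1=1$, $r_2=2$, yet $A = \frac{1}{2} e_1 (e_1+e_2)^T$ has nonzero eigenvalue $\frac{1}{2} \neq \lambda_1$ and $Av_2 = \frac{1}{2}e_1 \neq 0$, while (i) is still satisfied (with eigenvalue $\frac{1}{2}$). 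So in this subcase the implication (ii)$\implies$(i)(b) must be established without proving $Av_j=\lambda_j v_j$ for the given vectors --- for instance by passing to the companion or twisted system matrix and arguing about the spectrum of $A$ directly --- and your proposal, as it stands, proves (b) only when $r_1 < m$ or $r_2 = r_1$.
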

\begin{proof}
``$(i) \implies (ii)$''.
By condition $c)$, the system matrix has at most $r_1$ non-zero eigenvalues, because $\rank(A) \leq \min\{\rank(X^+),\rank(Y)\} \leq r_1$.
As the system matrix is diagonalizable by assumption $b)$, there exist non-zero distinct eigenvalues $\lambda_1,\dots,\lambda_r$ with $r \leq r_1$ and zero eigenvalues $\lambda_{r+1},\dots,\lambda_n$ with corresponding eigenvectors $v_1,\dots,v_r,v_{r+1},\dots,v_n$.
Rewriting into matrices leads to $W^{-1} A W = \Lambda$ for $W = \begin{bmatrix} v_1 & v_2 & \dots & v_n \end{bmatrix}$ and $\Lambda = \diag(\lambda_1,\dots,\lambda_n)$.
By \cref{2:Corollary_AX=Y}, the first assumption $a)$ is equivalent to $AX = Y$,  which implies the identity
\begin{equation*}
x_k 
= A^k x_0 
= W \Lambda^k W^{-1} W b
= W \Lambda^k b
= \sum_{j=1}^n \lambda_j^k b_j v_j,
\end{equation*}
for $k = 0,1,\dots,m$, where $b=(b_1,\dots,b_n)^T$ contains the coefficients of the linear combination of $x_0$ in the eigenvector basis, i.e., $b = W^{-1} x_0$.
Some of the coefficients $b_j$ may be zero, such that we obtain, after reordering, the identity 
\begin{equation*} 
x_k = \sum_{j=1}^{\tilde{r}} \lambda_j^k b_j v_j,
\end{equation*}
for $k = 1,\dots,m$ and $b_1,\dots,b_{\tilde{r}} \neq 0$ with $\tilde{r} \leq r \leq r_1$.
Since the rank of $Y$ is $r_1$ by condition $c)$, the data points $x_1,\dots,x_m$ span an $r_1$-dimensional vector subspace.
As the vectors $v_1,\dots,v_{\tilde{r}}$ are linearly independent, the sum have to has at least $r_1$ terms. 
Hence, $\tilde{r} = r =  r_1$ and $b_1,\dots,b_{r_1} \neq 0$ as well as $\lambda_1,\dots,\lambda_{r_1} \neq 0$.
Finally, the first data point can be expressed by the remaining non-zero coefficients $b_1,\dots,b_{r_1},b_{r_1+1},\dots,b_{r_2}$ through
\begin{equation*} 
x_0 = \sum_{j=1}^{r_2} b_j v_j.
\end{equation*}

``$(ii) \implies (i)$''. 
First, we define the following matrices
\begin{align*}
&W_{r_1} = \begin{bmatrix} v_1&\dots&v_{r_1} \end{bmatrix} \in \mathbb{C}^{n \times r_1}
&& W_{r_2} = \begin{bmatrix} v_1&\dots&v_{r_2} \end{bmatrix} \in \mathbb{C}^{n \times r_2}
\\
&K_{r_1} = \diag(b_1,\dots,b_{r_1}) \in \mathbb{C}^{r_1 \times r_1}
&& K_{r_2} = \diag(b_1,\dots,b_{r_2}) \in \mathbb{C}^{r_2 \times r_2}
\\
&M_{r_1} = \Vand(\lambda_1,\dots,\lambda_{r_1};1,m)  \in \mathbb{C}^{r_1 \times m}
&& M_{r_2} = \Vand(\lambda_1,\dots,\lambda_{r_1},0,\dots,0;1,m) \in \mathbb{C}^{r_2 \times m}
\\
&\Lambda_{r_1} = \diag(\lambda_1,\dots,\lambda_{r_1}) \in \mathbb{C}^{r_1 \times r_1}
&&\Lambda_{r_1 \times r_2} = \begin{bmatrix} \Lambda_{r_1} & 0 \end{bmatrix} \in \mathbb{C}^{r_1 \times r_2}.
\end{align*}
Now, we can rewrite the data matrices $X$ and $Y$ with the above introduced notation by
\begin{equation*}
X = W_{r_2} K_{r_2} M_{r_2},
\qquad
Y = W_{r_1} K_{r_1} \Lambda_{r_1} M_{r_1} 
= W_{r_1} K_{r_1} \Lambda_{r_1 \times r_2} M_{r_2}. 
\end{equation*}
Consider the linear operator $A_* = W_{r_1} \Lambda_{r_1 \times r_2} W_{r_2} ^+ \in \mathbb{C}^{n \times n}$. This operator exactly connects the data, since
\begin{equation*}
A_* X = W_{r_1} \Lambda_{r_1 \times r_2} W_{r_2} ^+ W_{r_2} K_{r_2} M_{r_2}
= W_{r_1} \Lambda_{r_1 \times r_2} K_{r_2} M_{r_2}
= W_{r_1} K_{r_1} \Lambda_{r_1 \times r_2} M_{r_2} 
= Y.
\end{equation*}
As a result, the system matrix $A = Y X^+$ is exact, too, because it minimizes the problem $\lVert A X - Y \rVert_F$, i.e., $A X = Y$. 
By \cref{2:Corollary_AX=Y}, the first assertion $\ker(X) \subseteq \ker(Y)$ is proven. 

The second claim, $\rank(Y) = r_1$, follows from the application of simple rank inequalities onto $Y = W_{r_1} K_{r_1} \Lambda_{r_1} M_{r_1}$ together with the conditions in $(ii)$.

As a consequence, we have $\rank(A) \leq r_1$, which implies that $A$ has at most $r_1$ non-zero eigenvalues.
Now, if we prove that $\lambda_1,\dots,\lambda_{r_1}$ are these eigenvalues, then the proof is complete, since this implies the diagonalizability of the system matrix with distinct non-zero eigenvalues (\cref{2:Corollary_DiagoA}).
More precisely, we have to show that
\begin{equation*}
A W_{r_1} K_{r_1} = W_{r_1} K_{r_1} \Lambda_{r_1}.
\end{equation*}
Note that the column vectors of $W_{r_1} K_{r_1}$ (representing the eigenvectors) are already non-zero by assumption.
To prove the equality we consider two cases:

1. case: $r_1 < m$. Defining the matrix $\tilde{M}_{r_1} = \Vand(\lambda_1,\dots,\lambda_{r_1};2,m) \in \mathbb{C}^{r_1 \times m-1}$ (i.e., without the first column), then we obtain by the exactness of the system matrix
\begin{equation*}
A W_{r_1} K_{r_1} \tilde{M}_{r_1} = W_{r_1} K_{r_1} \Lambda_{r_1} \tilde{M}_{r_1}.
\end{equation*}
Since the eigenvalues are distinct and $r_1 < m$, the matrix $\tilde{M}_{r_1}$ has a right inverse (which is given by the pseudoinverse), hence, we arrive at the desired assertion.

2. case: $r_1 = m$. From $\rank(Y) = m$ follows $\ker(Y) = \{0\}$ and, consequently, $\ker(X) = \{0\}$ by the already proven condition $a)$. 
Hence, $\rank(X) = m$, too and by \cref{2:Lemma_DimImKerA}, the system matrix has $\rank(A) = m$ and $\dim~\ker(A) = n-m$. 
As the second identity geometrically implies $\im(Y) \subseteq \langle v_1,\dots,v_m \rangle$, we obtain the following relationship
\begin{equation*}
\im(A) \subseteq \im(Y) \subseteq \langle v_1,\dots,v_m \rangle.
\end{equation*}
Since $\rank(A) = m$ and $v_1,\dots,v_m$ are linearly independent, it follows $\im(A) = \langle v_1,\dots,v_m \rangle$.
Consequently, the remaining linearly independent vectors $v_{m+1},\dots,v_{r_2}$ belong to the kernel of the system matrix $A$.
Rewriting the matrix $X$ into $X= W_{r_1} K_{r_1} M_{r_1} + q e_1^T$ with $q = \sum_{j=m+1}^{r_2} b_j v_j$ with the first standard basis vector $e_1$, we obtain
\begin{equation*}
W_{r_1} K_{r_1} \Lambda_{r_1} M_{r_1} = Y = A X = A W_{r_1} K_{r_1} M_{r_1} +  A q e_1^T = A W_{r_1} K_{r_1} M_{r_1},
\end{equation*}
because $q \in \ker(A)$.
Multiplying this equation by $M_{r_1}^+$ from the right side, we obtain the desired algebraic identity, since $M_{r_1}^+$ is a right inverse.
\end{proof}

\section{Companion Dynamic Mode Decomposition (CDMD)}
In 2008, Schmid and Sesterhenn presented the first version of DMD~\cite{schmid:2008:APS}. 
This variant will be referred to as Companion Dynamic Mode Decomposition (CDMD).
CDMD has been investigated by experimental and numerical data by Schmid~\cite{schmid:2010:dmd_numerical_data,schmid:2011:applications_of_dmd}.
The first approaches of a theoretic analysis were performed by Rowley et al.~\cite{rowley:2009:spectral_nonlinear_flow}.
Before we discuss the derivation of CDMD, the companion matrix will be defined, which is eponymous for this variant of DMD.

\begin{definition}
For a vector $c = (c_0,\dots,c_{m-1})^T \in \mathbb{C}^m$, we define the matrix $C_c \in \mathbb{C}^{m \times m}$ of the form
\begin{equation*}
\begin{pmatrix}
0 & 0  & \cdots & c_0 \\
1 & 0  & \cdots & c_1 \\
\vdots   & \ddots & \ddots  & \vdots \\
0  & \hdots   &    1   & c_{m-1} 
\end{pmatrix}
\end{equation*}
as the companion matrix to the vector $c$.
\end{definition}

For the derivation of CDMD, we consider a given data set $x_0,\dots,x_m \in \mathbb{C}^n$. 
Again, we rewrite the data points into the matrices $X = \begin{bmatrix} x_0 & \dots & x_{m-1} \end{bmatrix}$ and $Y = \begin{bmatrix} x_1 & \dots & x_m \end{bmatrix}$.
Now, instead of using the system matrix $A$, which connects the matrices $X$ and $Y$ (from the left), i.e., $AX \approx Y$, another approach is to look for a right-hand multiplied matrix $C \in \mathbb{C}^{m \times m}$ such that $Y \approx X C$.
By construction, we can choose $C$ as a companion matrix $C_c$ and, hence, the problem reduces to find a vector $c$ such that
\begin{equation*}
Y = X C_c + q e_m^{T}
\end{equation*}
with a minimal error $q \in \mathbb{C}^{n}$, where $e_m$ represent the last standard basis vector.
Under the assumption $n \gg m$, the companion matrix $C_c \in \mathbb{C}^{m \times m}$ is substantially lower dimensional than the system matrix $A \in \mathbb{C}^{n \times n}$.
In addition, with decreasing error $q$ the companion matrix approximates the spectral-theoretic properties of $A$. 
In fact, for an eigenvector $v$ of $C_c$ to the eigenvalue $\lambda$ the transformed eigenvector $\vartheta = X v$ satisfies
\begin{equation*}
A \vartheta = A X v \approx X C_c v = X \lambda v = \lambda \vartheta.
\end{equation*}
In sum, the computation of $C_c$ is reduced to the calculation of the associated vector $c = (c_0,\dots,c_{m-1})^T$. 
This vector minimizes the error $q = x_m - Xc$ and hence only need to solve the following minimization problem
\begin{equation*}
\min_{c \in \mathbb{C}^{m}} \lVert X c - x_m \rVert_2^2,
\end{equation*}
which will be solved by $c = X^+ x_m$. 
Consequently, we can formulate CDMD as an algorithm, however, before, we define the companion matrix to a given data set. 

\begin{definition} For data $x_0,\dots,x_m \in \mathbb{C}^n$ with associated matrix $X = \begin{bmatrix} x_0 & \dots & x_{m-1} \end{bmatrix}$, we call the companion matrix $C_c$ to the vector $c = X^+ x_m \in \mathbb{C}^m$ the companion matrix to the data $x_0,\dots,x_m$.
\end{definition}

\begin{algorithm}[Companion Dynamic Mode Decomposition (CDMD)] \label{3:Algorithmus_CDMD} \leavevmode\newline 
Input: Data $x_0,\dotso,x_m \in \mathbb{C}^n$. \\
Output: DMD eigenvalues $\lambda_1,\dotso,\lambda_m \in \mathbb{C}$, DMD modes $\vartheta_1,\dotso,\vartheta_m \in \mathbb{C}^n$, and DMD amplitudes $a_1,\dots,a_m \in \mathbb{C}$.
\begin{enumerate}
\item Define the matrices $X:= \begin{bmatrix} x_0&\dotso&x_{m-1} \end{bmatrix}, Y:= \begin{bmatrix} x_1&\dotso&x_{m} \end{bmatrix} \in \mathbb{C}^{n \times m}$.
\item Compute the $c = X^+x_m \in \mathbb{C}^m$.
\item Construct the companion matrix $C_c \in \mathbb{C}^{m \times m}$ to the vector $c$.
\item Compute the DMD eigenvalues $\lambda_1,\dots,\lambda_m$ and eigenvectors $W^{-1} = \begin{bmatrix} v_1&\dotso&v_m \end{bmatrix}$ of $C_c$ to the vector $c$.
\item Calculate DMD modes $\Theta = \begin{bmatrix} \vartheta_1&\dotso&\vartheta_m \end{bmatrix} = X W^{-1} \in \mathbb{C}^{n \times m}$.
\item Compute the DMD amplitudes by $K = \diag(a_1,\dots,a_m) =(\Vand(\lambda_1,\dots,\lambda_m) W^{-1})^+$.
\end{enumerate} 
\end{algorithm}

The algorithm described here differs from the standard literature as we introduce the necessary concept of DMD amplitudes.
Note, that the matrix $K$ in step $6$, which defines the DMD amplitudes, may not be diagonal. 
However, if the eigenvalues are distinct, then $K$ is a diagonal matrix, as we will observe later.
The current definition of the DMD amplitudes seems to be obscure and not very intuitive. 
Later on, we will show that these DMD amplitudes are the right scaling factors for an exact reconstruction.
In addition, we will prove that this choice equals $(a_1,\dots,a_m)^T = \Theta^+ x_0$ under some further assumption.

For a theoretical investigation, we need a more compact representation of the companion matrix $C_c$ from \cref{3:Algorithmus_CDMD}. 
To this end, consider the following minimization problem
\begin{equation*}
\min_{C \in \mathbb{C}^{m \times m}} \lVert X C - Y \rVert_F^2,
\end{equation*}
which is closely related to the construction of the companion matrix.
An explicit solution of the minimization problem is given by  $C = X^+ Y \in \mathbb{C}^{m \times m}$.
This matrix will be captured in the following definition.

\begin{definition} For data $x_0,\dots,x_m \in \mathbb{C}^n$ with associated matrices $X = \begin{bmatrix} x_0 & \dots & x_{m-1} \end{bmatrix}$ and $Y = \begin{bmatrix} x_1 & \dots & x_m \end{bmatrix}$, we call the matrix $C = X^+ Y \in \mathbb{C}^{n \times n}$ the twisted system matrix to the data $x_0,\dots,x_m$.
\end{definition}

\begin{remark}
At first glance, the two approaches seems to be equivalent. 
However, note that the twisted system matrix $C$ and the companion matrix $C_c$ are not necessarily equal or similar. This fact can be observed by a rank truncation of the matrices $X$ and $Y$.
In particular, the companion matrix $C_c$ has at least rank $m-1$, because the first $m-1$ columns are linearly independent. 
However, the rank of the matrix $C = X^+ Y$ depends only on the matrices $X$ and $Y$ and therefore it may be less than $m-1$, which implies a non-similarity in general.
\end{remark}

The following lemma and corollary characterize the relations between the three objects $A, C$, and $C_c$.

\begin{lemma} \label{3:Lemma_C_C_c_A} 
Let the system matrix $A$, the twisted system matrix $C$, and the companion matrix $C_c$ be given to the data $x_0,\dots,x_m \in \mathbb{C}^n$ as well as the projections $P_X = X X^+$ and $P_{X^*} = X^+ X$ onto $\im(X) \subseteq \mathbb{C}^n$ and $\im(X^*) \subseteq \mathbb{C}^m$, respectively. Then the following assertions hold:
\begin{enumerate}
\item[(1)] $X^+ q = 0$, especially $Aq = 0$.
\item[(2)] $P_{X^*} C_c P_{X^*} = C P_{X^*} = X^+ A X$.
\item[(3)] $X C_c X^+ = X C X^+ = P_{X} A$.
\end{enumerate}
\end{lemma}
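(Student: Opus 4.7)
I would dispatch the three claims in sequence, relying throughout on the Moore--Penrose identities $X^+ X X^+ = X^+$ and $X X^+ X = X$, together with the explicit column structure of $C_c$.

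For (1), the point is that $q = x_m - Xc = (I - XX^+)\,x_m$, so
\begin{equation*}
X^+ q = X^+ x_m - (X^+ X X^+)\, x_m = X^+ x_m - X^+ x_m = 0
\end{equation*}
by the first Moore--Penrose identity, and consequently $A q = Y X^+ q = 0$.

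For (2), I would compute $X C_c$ column by column. The first $m-1$ columns of $C_c$ are the shifted basis vectors $e_2, \dots, e_m$, so $X e_{j+1} = x_j$ supplies the columns $x_1, \dots, x_{m-1}$; the last column is $c$, giving $X c = XX^+ x_m = x_m - q$. Hence $X C_c = Y - q e_m^T$. Left-multiplication by $X^+$ and part (1) kill the rank-one correction:
\begin{equation*}
P_{X^*} C_c = X^+ X C_c = X^+ Y - (X^+ q)\,e_m^T = X^+ Y = C.
\end{equation*}
For the identification $C = X^+ A X$, I would simply substitute $A = Y X^+$ to obtain $X^+ A X = X^+ Y X^+ X = C \, P_{X^*}$, and invoke the fact---standard in the DMD setting where $x_0,\dots,x_{m-1}$ are assumed linearly independent, so that $X^+ X = I_m$---that $C \, P_{X^*} = C$.

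For (3), I would lift (2) by applying $X$ from the left. The identity $X P_{X^*} = X X^+ X = X$ gives $X C_c = X P_{X^*} C_c = X C$, and post-multiplying by $X^+$ yields the first equality of (3). The second is immediate: $X C X^+ = (X X^+)(Y X^+) = P_X A$.

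The subtle point I would watch for is the second equality in (2). The pseudoinverse identity $P_{X^*} X^+ = X^+$ readily supplies $P_{X^*} C = C$, but the identity actually needed, $C P_{X^*} = C$, is not a formal consequence of the Moore--Penrose identities alone; it requires that $X^+ Y$ vanish on $\ker(X)$, which is automatic once the data columns of $X$ are linearly independent (the typical $n \gg m$ regime of DMD). Once this is flagged, (3) is an immediate corollary of (2) and $X P_{X^*} = X$, so the real creative step is the column-wise identification $X C_c = Y - q e_m^T$.
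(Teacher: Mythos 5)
Your argument is correct and in fact supplies more than the paper does: the paper's entire proof of this lemma is the sentence ``The identities are proven by simple algebraic manipulations,'' so there is no written argument to compare against. Your computations for (1), for the first equality $P_{X^*}C_c=C$ in (2) (via the column-wise identity $XC_c=Y-qe_m^T$), and for (3) are precisely the manipulations the authors presumably had in mind, and they go through with no hypothesis on the data; you are also right that (3) needs only $P_{X^*}C_c=C$ together with $XP_{X^*}=X$ and $XCX^+=P_XA$, so it does not depend on the problematic part of (2).

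The point you flag about the second equality in (2) is a genuine catch rather than a cosmetic caveat. Indeed $X^+AX=X^+YX^+X=C\,P_{X^*}$, and $C\,P_{X^*}=C$ is not a consequence of the Moore--Penrose identities; it holds exactly when $X^+Y$ annihilates $\ker(X)$. This can fail: for $n=1$, $m=2$ with $x_0=0$, $x_1=x_2=1$ one has $X=(0,1)$, $Y=(1,1)$, $X^+=(0,1)^T$, $A=1$, and
\begin{equation*}
C=X^+Y=\begin{pmatrix}0&0\\1&1\end{pmatrix}
\neq
\begin{pmatrix}0&0\\0&1\end{pmatrix}
=X^+AX,
\end{equation*}
so the identity $C=X^+AX$ as stated in the lemma is false in general. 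It does hold under $\ker(X)\subseteq\ker(Y)$ or under linear independence of $x_0,\dots,x_{m-1}$, which is exactly the hypothesis the paper adds in \cref{3:Corollary_C_C_c_A}; your proof correctly isolates this as the only conditional step.
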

\begin{proof} The identities are proven by simple algebraic manipulations.
\end{proof}

A direct consequence of this lemma is that the twisted system matrix and the companion matrix are equal, if the first $m$ snapshots are linearly independent. 
In addition, in the case of $n \gg m$, it illustrates that the companion matrix $C_c \in \mathbb{C}^{m \times m}$ is  a low-dimensional representation of the system matrix $A \in \mathbb{C}^{n \times n}$.

\begin{corollary} \label{3:Corollary_C_C_c_A}
Let the system matrix $A$, the twisted system matrix $C$ and the companion matrix $C_c$ be given to the data $x_0,\dots,x_m \in \mathbb{C}^n$. If $x_0,\dots,x_{m-1}$ are linearly independent, then the following assertions hold:
\begin{enumerate}
\item[(1)] $C_c = C$.
\item[(2)] $C_c = X^+ A X$.
\end{enumerate}
\end{corollary}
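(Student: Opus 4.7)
The plan is to deduce both identities directly from \cref{3:Lemma_C_C_c_A} by observing what the hypothesis does to the projection $P_{X^*}$. The linear independence of $x_0,\dots,x_{m-1}$ is exactly the statement that $X \in \mathbb{C}^{n \times m}$ has full column rank $m$. For such an $X$, the pseudoinverse satisfies $X^+ X = I_m$, so $P_{X^*} = X^+ X = I_m$ is the identity on $\mathbb{C}^m$, rather than a proper projection onto $\im(X^*)$.

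With this observation in hand, the first assertion is immediate: the identity $P_{X^*} C_c = C$ from part $(2)$ of \cref{3:Lemma_C_C_c_A} collapses to $C_c = C$. The second assertion then follows by combining this with the other half of the same identity, namely $C = X^+ A X$, giving $C_c = X^+ A X$.

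There is no real obstacle here; the work has been done inside \cref{3:Lemma_C_C_c_A}. The only point worth spelling out carefully is why full column rank of $X$ forces $P_{X^*} = I_m$: the projection $P_{X^*} = X^+ X$ is always the orthogonal projection onto $\im(X^*) \subseteq \mathbb{C}^m$, and $\im(X^*)$ has dimension equal to $\rank(X) = m$ under the hypothesis, so $\im(X^*) = \mathbb{C}^m$ and the projection is the identity. Everything else is a one-line substitution.
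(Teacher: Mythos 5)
Your proposal is correct and follows exactly the paper's own argument: linear independence of $x_0,\dots,x_{m-1}$ gives $X^+X=I$, and both assertions then drop out of identity $(2)$ of \cref{3:Lemma_C_C_c_A}. You merely spell out the justification for $P_{X^*}=I$ a bit more explicitly than the paper does.
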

\begin{proof} Since $x_0,\dots,x_{m-1}$ are linearly independent, the identity $X^+ X = I$ holds, which implies the assertions by \cref{3:Lemma_C_C_c_A}
\end{proof}

The property of a low-dimensional representation suggests that CDMD inherits the characteristic reconstruction of the system matrix.
Indeed, the following theorem proves this fact using the new concept of DMD amplitudes from \cref{3:Algorithmus_CDMD}. 
In the literature a similar theorem is known~\cite[Theorem 1]{rowley:2009:spectral_nonlinear_flow} that does not account for amplitudes. 
Hence, the choice of the corresponding modes is not appropriate for the reconstruction as these are computed up to a scaling (since they stem from eigenvectors).  

\begin{theorem}[Reconstruction-property of CDMD] \label{3:Thoerem_Reconst_CDMD}
Let DMD eigenvalues $\lambda_1,\dots,\lambda_m$, DMD amplitudes $a_1,\dots,a_m$, and DMD modes $\vartheta_1,\dots,\vartheta_m$ be given by \cref{3:Algorithmus_CDMD} to the data $x_0,\dots,x_m \in \mathbb{C}^n$. If the DMD eigenvalues $\lambda_1,\dots,\lambda_m$ are distinct, then the following identities hold:
\begin{equation*}
x_k = \sum_{j=0}^m \lambda_j^k a_j \vartheta_j,
\qquad
x_m = \sum_{j=0}^m \lambda_j^m a_j \vartheta_j + q,
\end{equation*}
for $k = 0,\dotso,m-1$ and $q = x_m - Xc$.
\end{theorem}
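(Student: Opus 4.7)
The plan is to reduce the problem to two matrix identities: a shift identity coming from the construction of $C_c$, and an identification of $W$ as a diagonal rescaling of $\Vand$ that uses the distinctness of the eigenvalues. For the first, I compute $X C_c$ column-by-column: the $j$-th column of $C_c$ is $e_{j+1}$ for $j < m$ (from the unit subdiagonal) and $c$ for $j = m$, so $X C_c = [x_1, \dots, x_{m-1}, Xc] = Y - q e_m^T$. Iterating via $C_c^k e_1 = e_{k+1}$ for $k = 0, \dots, m-1$ and $C_c^m e_1 = c$ yields
\begin{equation*}
X C_c^k e_1 = x_k \quad (k = 0, \dots, m-1), \qquad X C_c^m e_1 = Xc = x_m - q.
\end{equation*}

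For the second step, I use the characteristic polynomial $\lambda^m = c_0 + c_1 \lambda + \dots + c_{m-1} \lambda^{m-1}$ of $C_c$ to verify column-by-column that $\Vand \, C_c = \Lambda \, \Vand$, so each row of $\Vand$ is a left eigenvector of $C_c$. Since $W C_c = \Lambda W$ and left eigenvectors are unique up to scaling when the eigenvalues are distinct, I obtain $W = D \Vand$ for some invertible diagonal $D$. Substituting into the algorithm's formula yields $\Vand W^{-1} = D^{-1}$, hence $K = (D^{-1})^+ = D$ is diagonal, so that $\diag(a_1,\dots,a_m) = K$ is well-defined and $W e_1 = D \Vand e_1 = (a_1,\dots,a_m)^T$.

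Combining both steps with $C_c = W^{-1} \Lambda W$ and $\Theta = X W^{-1}$, I obtain
\begin{equation*}
x_k = X C_c^k e_1 = \Theta \Lambda^k (W e_1) = \Theta \Lambda^k (a_1, \dots, a_m)^T = \sum_{j=1}^m \lambda_j^k a_j \vartheta_j
\end{equation*}
for $k = 0, \dots, m-1$. The case $k = m$ is identical but picks up the residual $q$ from $X C_c^m e_1 = x_m - q$, yielding the claimed $\sum_j \lambda_j^m a_j \vartheta_j + q$.

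The main obstacle is establishing $W = D \Vand$: the algorithm provides $W$ only as an abstract eigenvector matrix, so one must exploit the companion structure of $C_c$ (through the Vandermonde identity $\Vand \, C_c = \Lambda \, \Vand$) together with the distinct-eigenvalue hypothesis to pin down $W$ up to a diagonal factor. Without this step, there is no reason for the pseudoinverse-defined $K$ to be diagonal, and the scalings of the modes $\vartheta_j$ in the reconstruction would be wrong.
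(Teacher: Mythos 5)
Your proof is correct and rests on the same two pillars as the paper's own argument: the Vandermonde matrix diagonalizes the companion matrix (you verify $\Vand\, C_c = \Lambda\, \Vand$ explicitly via the characteristic polynomial, which the paper only asserts), and the algorithm's $K=(\Vand\, W^{-1})^+$ is exactly the diagonal rescaling between the computed eigenvectors and the Vandermonde inverse. The only cosmetic difference is that you iterate the shift $X C_c^k e_1 = x_k$ where the paper reads the columns off the single identity $X=\Theta K \Vand$; these are the same computation.
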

\begin{proof}
Since the eigenvalues are distinct, the companion matrix $C_c$ will be diagonalized by the Vandermonde matrix:
\begin{equation*}
\Vand(\lambda_1,\dots,\lambda_m) C_c \Vand(\lambda_1,\dots,\lambda_m)^{-1} = \Lambda.
\end{equation*}
However, the eigenvectors $\begin{bmatrix} z_1 & \dots & z_m \end{bmatrix} = \Vand(\lambda_1,\dots,\lambda_m)^{-1}$ may not coincide with the eigenvectors $v_1,\dots,v_m$ of the companion matrix produced by \cref{3:Algorithmus_CDMD}. 
As the eigenvalues are distinct, there exist scaling factors $\alpha_1,\dots,\alpha_m \in \mathbb{C}$ such that for $j = 1,\dots,m$:
\begin{equation*}
\alpha_j v_j = z_j.
\end{equation*}
For the eigenvectors $v_1,\dots,v_m$ and scaling factors $\alpha_1,\dots,\alpha_m$ we define the matrices $W^{-1} = \begin{bmatrix} v_1 & \dots & v_m \end{bmatrix}$ and $K = \diag(\alpha_1,\dots,\alpha_m$), respectively.
As a result, we get the following connection
\begin{equation*}
W^{-1} K = \Vand(\lambda_1,\dots,\lambda_m)^{-1}.
\end{equation*}
Consequently, $K = (\Vand(\lambda_1,\dots,\lambda_m) W^{-1})^{-1}$, which shows that $K$ equals the matrix in step $6$ of \cref{3:Algorithmus_CDMD} consisting of the DMD amplitudes, i.e., $\alpha_j = a_j$ for $j =1,\dots,m$.
Multiplying the above equation by the matrix $X$, we obtain
\begin{equation*}
X W^{-1} K = X \Vand(\lambda_1,\dots,\lambda_m)^{-1}.
\end{equation*}

By this notation the DMD modes $\Theta = \begin{bmatrix} \vartheta_1&\dots&\vartheta_m 
\end{bmatrix}$ are given by $\Theta = X W^{-1}$ and, therefore, the first assertions follows by rearranging the above equation to $X = \Theta K \Vand(\lambda_1,\dots,\lambda_m)$.
The second identity is a result of the following calculation
\begin{align*}
x_m 
= Xc + q 
= XCe_m^T + q 
&= \Theta K \Vand(\lambda_1,\dots,\lambda_m) C e_m^T + q \\
&= \Theta K \Lambda \Vand(\lambda_1,\dots,\lambda_m) e_m^T + q \\
&= \sum_{j=0}^m \lambda_j^m a_j \vartheta_j + q.
\end{align*}
\end{proof}

\begin{corollary} \label{3:Corollary_Ampltidues_CDMD_Alternative} 
Let DMD eigenvalues $\lambda_1,\dots,\lambda_m$, DMD amplitudes $a_1,\dots,a_m$, and DMD modes $\vartheta_1,\dots,\vartheta_m$ be given by \cref{3:Algorithmus_CDMD} to the data $x_0,\dots,x_m \in \mathbb{C}^n$. If the DMD eigenvalues $\lambda_1,\dots,\lambda_m$ are distinct and $x_0,\dots,x_{m-1}$ are linearly independent, then the DMD amplitudes $a = (a_1,\dots,a_m)^T$ can be calculated by \begin{equation*}
a = \Theta^+ x_0.
\end{equation*} 
\end{corollary}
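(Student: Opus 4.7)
The plan is to extract the identity $x_0 = \Theta a$ directly from the reconstruction identity just established in \cref{3:Thoerem_Reconst_CDMD}, and then invert it by left-multiplying with $\Theta^+$, provided $\Theta$ has full column rank. The setup is already in place: the theorem gives the factorization $X = \Theta K \Vand(\lambda_1,\dots,\lambda_m)$, where $K = \diag(a_1,\dots,a_m)$.

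First I would read off the first column of this factorization. The first column of $X$ is $x_0$, and the first column of $\Vand(\lambda_1,\dots,\lambda_m)$ is the all-ones vector $(1,\dots,1)^T$ because $\lambda_j^0 = 1$. Hence
\begin{equation*}
x_0 = \Theta K (1,\dots,1)^T = \Theta (a_1,\dots,a_m)^T = \Theta a.
\end{equation*}

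Next I would verify that $\Theta$ has full column rank, so that $\Theta^+ \Theta = I_m$ and the relation $x_0 = \Theta a$ can be inverted to $a = \Theta^+ x_0$. This is where the two hypotheses of the corollary combine: by linear independence of $x_0,\dots,x_{m-1}$, the matrix $X \in \mathbb{C}^{n \times m}$ has rank $m$; by distinctness of the eigenvalues $\lambda_1,\dots,\lambda_m$ of the $m \times m$ companion matrix $C_c$, the eigenvector matrix $W^{-1}$ is invertible. Therefore $\Theta = X W^{-1}$ still has rank $m$, i.e., full column rank, and the pseudoinverse acts as a left inverse.

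The argument is quite short, so there is no real obstacle. The one subtle point worth making explicit is the role of the two hypotheses: distinctness of the eigenvalues is what secured the reconstruction identity in \cref{3:Thoerem_Reconst_CDMD} and the invertibility of $W^{-1}$, while the linear independence of the first $m$ snapshots is what promotes the identity $x_0 = \Theta a$ from a mere consistency relation into a uniquely invertible equation for $a$.
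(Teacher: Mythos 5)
Your proposal is correct and follows essentially the same route as the paper: both extract the identity $x_0 = \Theta a$ from the factorization in \cref{3:Thoerem_Reconst_CDMD} (the paper states this directly, you spell out that it is the first column with the all-ones first column of the Vandermonde matrix), and both then invoke the full rank of $X$ and of $W^{-1}$ to conclude that $\Theta = XW^{-1}$ has linearly independent columns, so that $\Theta^+$ acts as a left inverse. No substantive difference.
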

\begin{proof}
By \cref{3:Thoerem_Reconst_CDMD}, we obtain the relation $x_0 = \Theta a$,
where $\Theta = \begin{bmatrix} \vartheta_1&\dots&\vartheta_m \end{bmatrix} = X W^{-1}$ with $W^{-1} = \begin{bmatrix} v_1 & \dots & v_m \end{bmatrix}$. Since the matrix $X$ and $W^{-1}$ have full rank, the DMD modes will be linear independent and, consequently, there exist a left-inverse of $\Theta$, which is given by its pseudoinverse $\Theta^+$. Hence, $ a = \Theta^+ x_0$.
\end{proof}

Even though, the method of CDMD is mathematically correct, a practical implementation leads to an ill-conditioned algorithm~\cite{schmid:2010:dmd_numerical_data}.
The reason for this is the external computation of the vector $c$ (which define companion matrix $C_c$) that leads to unsatisfied approximation properties of the system matrix $A$.
This problem can be tackled by using the robust singular value decomposition (SVD), and will be discussed in the next section.

\section{SVD-Dynamic Mode Decomposition (SDMD)}
In 2010, the algorithm of DMD was modified radically by Schmid. 
He published a variant of DMD based on a reduced singular value decomposition~\cite{schmid:2010:dmd_numerical_data}. 
For this reason, we will denote this method by SVD-Dynamic Mode Decomposition (SDMD). 
The result was a robust and stable algorithm, which serves as a basis for the most modern version of DMD. 
We start this section by deducing this algorithm.

Let the system matrix $A$ to the data $x_0,\dots,x_m \in \mathbb{C}^n$ be given as well as the reduced singular value decomposition of $X$ with $r= \rank(X)$ and $X=U\Sigma V^* \in \mathbb{C}^{n \times m}$,
where $U\in \mathbb{C}^{n \times r}$, $V \in \mathbb{C}^{m \times r}$ and $\Sigma \in \mathbb{R}^{r \times r}$. 
Utilizing the transformation matrix $U$, we construct the low-dimensional representation $S$ of the system matrix $A$ by
\begin{equation*}
S := U^* A U \in \mathbb{C}^{r \times r}.
\end{equation*}
For an explicit calculation of $S$, it is necessary to avoid the computation of the system matrix $A$.
To this end, we calculate
\begin{equation*}
S = U^* A U 
= U^* A U \Sigma V^* V \Sigma^{-1}
= U^* A X V \Sigma^{-1}
= U^* Y V\Sigma^{-1}.
\end{equation*}
Now, we compute the eigenvalues $\lambda_i$ and eigenvectors $v_i$ of the matrix $S$ and finally transform them by the matrix $U$ into
\begin{equation*}
\vartheta_i := Uv_i \in \mathbb{C}^n,
\end{equation*}
in order to obtain an approximation of the eigenvalues and eigenvectors of the system matrix:
\begin{equation*}
A \vartheta_i = A U v_i \approx U U^* A U v_i = U S v_i = U \lambda v_i = \lambda \vartheta_i. 
\end{equation*}
Consequently, the algorithm can be formulated as follows.

\begin{algorithm}(SVD-Dynamic Mode Decomposition) \leavevmode\newline 
Input: Data $x_0,\dotso,x_m \in \mathbb{C}^n$.
\label{4:Algorithmus_SDMD}  \\ 
Output: DMD eigenvalues $\lambda_1,\dotso,\lambda_r \in \mathbb{C}$, DMD modes $\vartheta_1,\dotso,\vartheta_r \in \mathbb{C}^n$, and DMD amplitudes $a_i,\dots,a_r \in \mathbb{C}$.
\begin{enumerate}
\item Define the matrices $X:=(x_0,\dotso,x_{m-1}), Y:=(x_1,\dotso,x_m) \in \mathbb{C}^{n \times m}$.
\item Compute the reduced singular value decomposition $X=U\Sigma V^*$ with $r = \rank(X)$.
\item Define the DMD matrix $S:=U^*YV\Sigma^{-1} \in \mathbb{C}^{r \times r}$.
\item Compute the DMD eigenvalues $\lambda_i$ and eigenvectors $v_i$ of $S$.
\item Calculate the DMD modes $\vartheta_i=Uv_i \in \mathbb{C}^n$ and define $\Theta = (\vartheta_1,\dots,\vartheta_r) \in \mathbb{C}^{n \times r}$.
\item Compute the DMD amplitudes $a=\Theta^+ x_0 \in \mathbb{C}^r$ with $a=(a_1,\dots,a_r)^T$.
\end{enumerate} 
\end{algorithm}

In \cref{4:Algorithmus_SDMD}, the DMD amplitudes will be defined intuitively by a best-fit linear combination of the first snapshots $x_0$ in the modes selection (compare \cref{3:Corollary_Ampltidues_CDMD_Alternative}).
For a deeper understanding of SDMD, we first characterize the connection to CDMD, which will be examined in the following lemma.

\begin{lemma} \label{4:Lemma_CS_similar}
Let the companion matrix $C_c$ and the DMD matrix $S$ be given by \cref{4:Algorithmus_SDMD} to the data $x_0,\dotso,x_m \in \mathbb{C}^n$. Then the following identity holds:
\begin{equation*}
    S = (\Sigma V^*) C_c (V \Sigma^{-1}).
\end{equation*}
In particular, if $x_0,\dotso,x_{m-1}$ are linearly independent, then the matrices $C_c$ and $S$ are similar.
\end{lemma}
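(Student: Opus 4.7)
My plan is to reduce the claim $S = (\Sigma V^*) C_c (V \Sigma^{-1})$ to the simpler identity $U^* Y = \Sigma V^* C_c$; once this is established, right-multiplication by $V\Sigma^{-1}$ together with the definition $S = U^* Y V \Sigma^{-1}$ finishes the first part immediately. So the substantive content is to show $U^* Y = \Sigma V^* C_c$.

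To do so, I would exploit the fact that the reduced SVD $X = U \Sigma V^*$ yields $U^* X = \Sigma V^*$ (since $U$ has orthonormal columns). Combined with the definition of the companion matrix, which gives $X C_c = Y - q e_m^T$ where $q = x_m - Xc$ is the residual, this recasts the target identity as
\begin{equation*}
\Sigma V^* C_c = U^* X C_c = U^* Y - (U^* q)\, e_m^T.
\end{equation*}
Hence everything comes down to verifying $U^* q = 0$. This is where I would invoke \cref{3:Lemma_C_C_c_A}(1), which states $X^+ q = 0$. Using the SVD form $X^+ = V \Sigma^{-1} U^*$ and the fact that $V \Sigma^{-1}$ has trivial kernel (both factors are injective, as $\Sigma$ is an $r \times r$ invertible diagonal matrix and $V$ has orthonormal columns), one obtains $U^* q = 0$ as required. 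Assembling these pieces produces $\Sigma V^* C_c = U^* Y$, and right-multiplication by $V \Sigma^{-1}$ yields the stated formula.

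For the similarity claim under the assumption that $x_0,\dots,x_{m-1}$ are linearly independent, I would observe that this is equivalent to $r = \rank(X) = m$. In that case $\Sigma \in \mathbb{C}^{m \times m}$ is invertible and $V \in \mathbb{C}^{m \times m}$ is unitary, so $\Sigma V^*$ is a genuine invertible matrix in $\mathbb{C}^{m \times m}$ with inverse $V \Sigma^{-1}$. The identity then exhibits $S$ as a conjugate of $C_c$, which is the definition of similarity.

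There is no real obstacle here; the only subtle point is that $V \Sigma^{-1}$ is generally not square, so one cannot literally ``cancel'' it from the right when passing between the two equivalent forms. I avoid this by proving the stronger equality $\Sigma V^* C_c = U^* Y$ directly and only then multiplying by $V \Sigma^{-1}$, which is a valid one-directional operation and suffices for the stated identity.
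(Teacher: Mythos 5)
Your proposal is correct and follows essentially the same route as the paper: both rest on the decomposition $Y = XC_c + qe_m^T$, the reduced SVD relations, and \cref{3:Lemma_C_C_c_A}(1) to kill the residual term (the paper rewrites $U^* = \Sigma V^* X^+$ and applies $X^+q=0$ directly, while you deduce $U^*q=0$ from it via injectivity of $V\Sigma^{-1}$ --- a trivial reorganization). Your explicit handling of the non-square factor $V\Sigma^{-1}$ and of the similarity claim when $r=m$ is a welcome bit of added care, but not a different argument.
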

\begin{proof}
By \cref{3:Lemma_C_C_c_A} and the reduced singular value decomposition $X = U \Sigma V^*$, we obtain
\begin{align*}
S
= U^* Y V \Sigma^{-1} 
&= U^* (X C_c + q e_m^T) V \Sigma^{-1} \\
&= \Sigma V^* X^+ ( X C_c + q e_m^T ) V \Sigma^{-1}
= \Sigma V^* X^+ X C_c V \Sigma^{-1} 
= \Sigma V^* C_c V \Sigma^{-1}
\end{align*}
\end{proof}

\begin{corollary}
Let the data $x_0,\dotso,x_m \in \mathbb{C}^n$ be given. If $x_0,\dotso,x_{m-1}$ are linearly independent, then CDMD and SDMD produce the same DMD eigenvalues.
\end{corollary}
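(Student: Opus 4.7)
The plan is to invoke Lemma 4.5 directly and then observe that, under the linear independence hypothesis, the transformation matrix relating $S$ and $C_c$ is actually invertible, so the similarity transformation $S = (\Sigma V^*) C_c (V \Sigma^{-1})$ upgrades to a full matrix similarity. Since similar matrices share their spectrum (with multiplicities), this immediately yields the claim.

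More concretely, I would first note that linear independence of $x_0,\dots,x_{m-1}$ is equivalent to $\rank(X) = m$, so in the reduced singular value decomposition $X = U \Sigma V^*$ of Step~2 of \cref{4:Algorithmus_SDMD} we have $r = m$. This makes $V \in \mathbb{C}^{m \times m}$ a square matrix with orthonormal columns, hence unitary, so $V^* V = V V^* = I_m$, and $\Sigma \in \mathbb{R}^{m \times m}$ is a diagonal matrix with strictly positive diagonal entries, hence invertible. Consequently $\Sigma V^*$ is invertible with inverse $V \Sigma^{-1}$, so the identity furnished by \cref{4:Lemma_CS_similar} exhibits $S$ and $C_c$ as genuinely similar $m \times m$ matrices.

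From here the conclusion is a one-liner: similar matrices have identical spectra (including algebraic multiplicities), so the eigenvalues computed in Step~4 of \cref{3:Algorithmus_CDMD} coincide with those computed in Step~4 of \cref{4:Algorithmus_SDMD}. I do not foresee a real obstacle; the only subtle point worth making explicit is why linear independence forces $r = m$ in the reduced SVD, since without this the matrix $\Sigma V^*$ is rectangular and no similarity can be asserted. This is precisely the role played by the hypothesis, and it mirrors the analogous step used earlier in \cref{3:Corollary_C_C_c_A} when passing from \cref{3:Lemma_C_C_c_A} to an equality between $C_c$ and $C$.
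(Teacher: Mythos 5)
Your proposal is correct and is essentially the argument the paper intends: \cref{4:Lemma_CS_similar} already asserts the similarity of $S$ and $C_c$ under the linear independence hypothesis, and the corollary follows because similar matrices have the same spectrum. Your added justification that $r=m$ makes $V$ unitary and $\Sigma V^*$ invertible with inverse $V\Sigma^{-1}$ is exactly the point the paper leaves implicit.
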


The corollary suggests that the reconstruction property of CDMD from \cref{3:Thoerem_Reconst_CDMD} will be also transferred onto SDMD.
In the case of distinct DMD eigenvalues, the DMD modes associated to equal eigenvalues only differ by a scaling factor.
Therefore, the DMD modes of SDMD have to be rescaled for an exact reconstruction. 
The following theorem shows the reconstruction property of SDMD, where the scaling factors are given by the DMD amplitudes.

\begin{theorem}[Reconstruction-property SDMD] \label{4:Thoerem_Reconst_SDMD}
Let DMD eigenvalues $\lambda_1,\dots,\lambda_m$, \allowbreak DMD amplitudes $a_1,\dots,a_m$, and DMD modes $\vartheta_1,\dots,\vartheta_m$ be given by \cref{4:Algorithmus_SDMD} to the data $x_0,\dotso,x_m \in \mathbb{C}^n$, where $x_0,\dotso,x_{m-1}$ are linearly independent. If the DMD eigenvalues are distinct, then the following identities hold:
\begin{equation*}
x_k = \sum_{j=1}^m {\lambda_j^k a_j \vartheta_j}, 
\hspace{20pt}
x_m = \sum_{j=1}^m {\lambda_j^m a_j \vartheta_j} + q
\end{equation*}
for $k = 0,\dotso,m-1$ and $q = x_m - Xc$ with $c = X^+x_m$.
\end{theorem}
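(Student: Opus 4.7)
The plan is to reduce the SDMD reconstruction to the already established CDMD reconstruction (\cref{3:Thoerem_Reconst_CDMD}) by exploiting the similarity $S = \Sigma V^* C_c V\Sigma^{-1}$ established in \cref{4:Lemma_CS_similar}. Under the assumption that $x_0,\dots,x_{m-1}$ are linearly independent, we have $r = \rank(X) = m$, hence $V \in \mathbb{C}^{m \times m}$ is unitary with $V^* V = I_m$, so the similarity is a genuine conjugation. In particular, the DMD eigenvalues $\lambda_1,\dots,\lambda_m$ produced by SDMD coincide with those of CDMD, and by distinctness each eigenspace is one-dimensional.

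First, I would relate the eigenvectors of $S$ to those of $C_c$. If $v_j$ is an eigenvector of $S$ to $\lambda_j$, then $u_j := V \Sigma^{-1} v_j$ is an eigenvector of $C_c$ to the same eigenvalue. Setting $W^{-1} := \begin{bmatrix} u_1 & \dots & u_m \end{bmatrix}$ as a valid eigenvector matrix for $C_c$, the corresponding CDMD modes become
\begin{equation*}
\Theta^{\mathrm{CDMD}} = X W^{-1} = U\Sigma V^* \cdot V\Sigma^{-1} \begin{bmatrix} v_1 & \dots & v_m \end{bmatrix} = U \begin{bmatrix} v_1 & \dots & v_m \end{bmatrix} = \Theta^{\mathrm{SDMD}},
\end{equation*}
so with this (always legitimate) scaling of the CDMD eigenvectors the two algorithms produce identical modes.

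Next, I would show that the amplitudes agree. By \cref{3:Corollary_Ampltidues_CDMD_Alternative}, the hypotheses (distinct eigenvalues and linearly independent $x_0,\dots,x_{m-1}$) imply that the CDMD amplitudes associated to this choice of eigenvectors are $a = \Theta^+ x_0$, which is precisely the definition of the SDMD amplitudes in \cref{4:Algorithmus_SDMD}. Hence eigenvalues, modes, and amplitudes all match between the two algorithms under the standing assumptions.

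With this identification, the two reconstruction identities of \cref{3:Thoerem_Reconst_CDMD} transfer verbatim to the SDMD quantities, and the residual $q = x_m - Xc$ with $c = X^+ x_m$ is identical in both statements because it depends only on $X$ and $x_m$. The main subtle point of the argument is the freedom in eigenvector scaling: one must fix a consistent scaling (namely $u_j = V\Sigma^{-1} v_j$) before invoking the CDMD theorem, and then use \cref{3:Corollary_Ampltidues_CDMD_Alternative} to verify that the Vandermonde-based amplitudes of CDMD coincide with the pseudoinverse-based amplitudes used in SDMD — without this step the two amplitude conventions could differ by a nonzero scalar on each mode.
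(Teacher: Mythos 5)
Your proposal is correct and follows essentially the same route as the paper: both proofs rest on the similarity $S=\Sigma V^* C_c V\Sigma^{-1}$ from \cref{4:Lemma_CS_similar} (with $r=m$ making $V$ unitary), transport the eigenvectors of $S$ to eigenvectors $V\Sigma^{-1}v_j$ of $C_c$ so that the SDMD modes coincide with suitably scaled CDMD modes, and then match the amplitudes with $\Theta^+x_0$. The only difference is packaging --- you invoke \cref{3:Thoerem_Reconst_CDMD} and \cref{3:Corollary_Ampltidues_CDMD_Alternative} as black boxes, whereas the paper re-derives the scaling matrix $K$ and verifies $\alpha=\Theta^{-1}x_0$ by direct computation --- and you correctly flag the eigenvector-scaling subtlety that makes the amplitude identification necessary.
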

\begin{proof} By the notation of the proof of \cref{3:Thoerem_Reconst_CDMD} and \cref{4:Lemma_CS_similar} we obtain
\begin{align*}
W S W^{-1} 
= \Lambda 
&= \Vand(\lambda_1,\dots,\lambda_m) C_c \Vand(\lambda_1,\dots,\lambda_m)^{-1} \\
&= \Vand(\lambda_1,\dots,\lambda_m) (\Sigma V^*)^{-1} S (\Sigma V^*) \Vand(\lambda_1,\dots,\lambda_m)^{-1}, 
\end{align*}
where the matrix $W^{-1}$ contains the eigenvectors of $S$. 
Consider a scaling matrix $K=\text{diag}(\alpha_1,\dotso,\alpha_m)$ that satisfies the equation
\begin{equation*}
W^{-1} K = (\Sigma V^*) \Vand(\lambda_1,\dots,\lambda_m)^{-1}.
\end{equation*}
Since the scaling matrix $K$ adjusts eigenvectors (in the same one-dimensional eigenspace), the scaling factors are $\alpha_1,\dots,\alpha_m \neq 0$ and, therefore, the matrix $K$ is invertible. By multiplying the above equation with $U$ from the left, we obtain
\begin{equation*}
\Theta K = U W^{-1} K = U (\Sigma V^*) \Vand(\lambda_1,\dots,\lambda_m)^{-1} = X \Vand(\lambda_1,\dots,\lambda_m)^{-1},
\end{equation*}
where $\Theta = U W^{-1}$ are the DMD modes (by \cref{4:Algorithmus_SDMD}). 
In sum, we obtain
\begin{equation*}
X = \Theta K \Vand(\lambda_1,\dots,\lambda_m),
\end{equation*}
which shows the first identity concerning the scalings $\alpha_1,\dots,\alpha_m$. The second statement follows analogously by
\begin{align*}
x_m 
= X C e_m^T + q 
&= \Theta K \Vand(\lambda_1,\dots,\lambda_m) C e_m^T + q \\
&= \Theta K \Lambda \Vand(\lambda_1,\dots,\lambda_m) e_m^T + q \\
&=  \sum_{j=1}^m {\lambda_j^m a_j \vartheta_j} + q.
\end{align*}
It misses to show that the scaling factors $\alpha_i$ are given by the DMD amplitudes $a_i$. 
Rewriting $\alpha = (\alpha_1,\dots,\alpha_m)^T$ and using the identity $X \Vand(\lambda_1,\dots,\lambda_m)^{-1} e = x_0$ (from \cref{3:Thoerem_Reconst_CDMD}), we obtain: 
\begin{align*}
\alpha 
= K e 
&= W \Sigma V^* \Vand(\lambda_1,\dots,\lambda_m)^{-1} e \\
&= W U^* X \Vand(\lambda_1,\dots,\lambda_m)^{-1} e
= W U^* x_0
= \Theta^{-1} x_0
= a.
\end{align*}
\end{proof}

\begin{remark} \label{4:Remark_EigenvalueEquation_with_Projection}
SDMD is characterized by the robust singular value decomposition and the reconstruction property. 
However, the spectral-theoretical connection of SDMD to the system matrix is not clear. 
Therefore, let the system matrix $A$ as well as DMD eigenvalues $\lambda_1,\dots,\lambda_m$ and DMD modes $\vartheta_1,\dots,\vartheta_m$ be given by \cref{4:Algorithmus_SDMD} to the data $x_0,\dotso,x_m \in \mathbb{C}^n$. Then the following equation hold:
\begin{equation*}
\lambda_i \vartheta_i
= \lambda_i U v_i
= U S v_i
= U U^* Y V \Sigma^{-1} v_i
= U U^* Y V \Sigma^{-1} U^* U v_i
= U U^* Y X^+ \vartheta_i
= U U^* A \vartheta_i.
\end{equation*}
We observe that the eigenvalue equation is correct up to the projection $P_X = X X^+ = U U^*$ onto the image of $X$. 
\end{remark}

The following proposition presents characterizations for the equality of the eigenvector equation of \cref{4:Remark_EigenvalueEquation_with_Projection}.

\begin{proposition} \label{4:Proposition_SDMD_A_Characterization}
Let the system matrix $A$ as well as DMD eigenvalues $\lambda_1,\dots,\lambda_m$ and DMD modes $\vartheta_1,\dots,\vartheta_m$ be given by \cref{4:Algorithmus_SDMD} to the data $x_0,\dotso,x_m \in \mathbb{C}^n$. In addition, let the error $q = x_m - Xc$ with  $c = X^+ x_m$ be given. Then the following assertions are equivalent:
\begin{itemize}
\itemsep0pt
\item[(i)] The DMD mode $\vartheta_i$ is an eigenvector of the system matrix $A$ to the eigenvalue $\lambda_i$.
\item[(ii)] $A \vartheta_i \in \langle x_0,\dots,x_{m-1} \rangle$.
\item[(iii)] $ \langle e_m, X^+ \vartheta_i \rangle \cdot q = 0$.
\item[(iv)] $x_m \in \langle x_0,\dots,x_{m-1}\rangle$ or $\langle e_m,V \Sigma^{-1} v_i \rangle = 0$.
\end{itemize}
\end{proposition}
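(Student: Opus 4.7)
The plan is to prove the chain (i) $\Leftrightarrow$ (ii) $\Leftrightarrow$ (iii) $\Leftrightarrow$ (iv), with \cref{4:Remark_EigenvalueEquation_with_Projection} acting as the pivot between the spectral statement and the subsequent geometric/algebraic conditions.

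For (i) $\Leftrightarrow$ (ii), I would invoke \cref{4:Remark_EigenvalueEquation_with_Projection} directly: it already establishes $\lambda_i \vartheta_i = U U^* A \vartheta_i = P_X A \vartheta_i$, where $P_X = U U^*$ is the orthogonal projection onto $\im(X) = \langle x_0,\dots,x_{m-1} \rangle$. Hence $A \vartheta_i = \lambda_i \vartheta_i$ holds iff $A \vartheta_i = P_X A \vartheta_i$, which is iff $A \vartheta_i \in \im(X)$.

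For (ii) $\Leftrightarrow$ (iii), I would use the companion-matrix identity $Y = X C_c + q e_m^T$ (which is how the companion vector $c$ and the residual $q$ are defined in \cref{3:Algorithmus_CDMD}) to get the decomposition
\begin{equation*}
A \vartheta_i \;=\; Y X^+ \vartheta_i \;=\; X C_c X^+ \vartheta_i \;+\; \langle e_m, X^+ \vartheta_i \rangle \, q .
\end{equation*}
The first summand lies in $\im(X)$, while $q = (I - P_X) x_m$ is orthogonal to $\im(X)$ by the least-squares property of the pseudoinverse. Consequently, $A \vartheta_i \in \im(X)$ iff the scalar multiple of $q$ vanishes, which is exactly (iii).

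For (iii) $\Leftrightarrow$ (iv), I would just split the product: $\langle e_m, X^+ \vartheta_i \rangle \, q = 0$ iff $q = 0$ or $\langle e_m, X^+ \vartheta_i \rangle = 0$. The first alternative is equivalent to $x_m \in \im(X) = \langle x_0,\dots,x_{m-1} \rangle$ because $q = (I - P_X) x_m$. For the second, substituting the reduced SVD $X = U \Sigma V^*$ (so that $X^+ = V \Sigma^{-1} U^*$) gives $X^+ \vartheta_i = V \Sigma^{-1} U^* U v_i = V \Sigma^{-1} v_i$, producing the stated form.

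The main (though mild) obstacle is the step (ii) $\Rightarrow$ (iii): one has to justify that a scalar multiple of $q$ can belong to $\im(X)$ only when it is the zero vector. This follows from $q \perp \im(X)$, but it is the essential observation that makes the companion decomposition useful; the remaining implications reduce to direct algebraic rewriting using the SVD and the definitions.
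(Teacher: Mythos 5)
Your proof is correct and follows essentially the same route as the paper's: both rest on the projected eigenvalue equation $\lambda_i \vartheta_i = P_X A \vartheta_i$ from \cref{4:Remark_EigenvalueEquation_with_Projection} together with the decomposition $Y = X C_c + q e_m^T$, the paper merely arranging the argument as a cycle $(iii) \Rightarrow (ii) \Rightarrow (i) \Rightarrow (iii)$ rather than your chain of pairwise equivalences. The only detail you omit is the check that $\vartheta_i = U v_i \neq 0$ (needed for $\vartheta_i$ to qualify as an eigenvector in (i)), which the paper supplies in its $(ii) \Rightarrow (i)$ step from the orthonormality of the columns of $U$ and $v_i \neq 0$.
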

\begin{proof} Let the companion matrix $C_c$ to the data $x_0,\dots,x_m$ be given.

$(iii) \implies (ii)$ By \cref{3:Lemma_C_C_c_A} we obtain the equality
\begin{equation*}
P_X A \vartheta_i 
= X C_c X^+ \vartheta_i
= X C_c X^+ \vartheta_i + q e_m^T X^+ \vartheta_i
= [XC_c + q e_m^T] X^+ \vartheta_i
= Y X^+ \vartheta_i
= A \vartheta_i,
\end{equation*}
which implies $A \vartheta_i \in \text{im}(X)$, i.e. $A \vartheta_i \in \langle x_0,\dots,x_{m-1} \rangle$.

``$(ii) \implies (i)$'' Let $A \vartheta_i \in \langle x_0,\dots,x_{m-1} \rangle$. Then the (algebraic) eigenvalue equation is trivially satisfied, since the projection $P_X$ can be ignored in the equation of \cref{4:Remark_EigenvalueEquation_with_Projection}.
Furthermore, since the columns of $U$ are orthogonal and $v_i \neq 0$, the vector $\vartheta_i = U v_i$ is non-zero and, therefore, an eigenvector of $A$.

``$(i) \implies (iii)$'' By the preliminary \cref{4:Remark_EigenvalueEquation_with_Projection}, \cref{3:Lemma_C_C_c_A}, and assumption (i), we obtain
\begin{equation*}
0 
=A \vartheta_i - \lambda_i \vartheta_i
= Y X^+ \vartheta_i - P_X A \vartheta_i
= [XC_c + q e_m^T] X^+ \vartheta_i - X C_c X^+ \vartheta_i
= q e_m^T X^+ \vartheta_i.
\end{equation*}

``$(iii) \iff (iv)$'' We reformulate the conditions by
\begin{align*}
\langle e_m, X^+ \vartheta_i \rangle \cdot q = 0
& \iff q = 0 \text{ or } \langle e_m,X^+ \vartheta_i \rangle = 0 \\
& \iff q = 0 \text{ or } \langle e_m , V \Sigma^{-1} U^* U v_i \rangle = 0 \\
& \iff x_m \in \langle x_0,\dots,x_{m-1} \rangle  \text{ or } \langle e_m , V \Sigma^{-1} v_i \rangle = 0.
\end{align*}
\end{proof}

A trivial consequence of the above proposition is the following corollary, which was first proven by Tu et al.~\cite{tu:2014:on_dmd_theorey_and_app}.

\begin{corollary} \label{4:Corollary_SDMD_A_Characterization}
Let the system matrix $A$ as well as DMD eigenvalues $\lambda_1,\dots,\lambda_m$ and DMD modes $\vartheta_1,\dots,\vartheta_m$ be given by \cref{4:Algorithmus_SDMD} to the data $x_0,\dotso,x_m \in \mathbb{C}^n$. If $x_m \in \allowbreak  \langle \allowbreak x_0, \allowbreak \dots,x_{m-1} \allowbreak \rangle $, then the DMD modes and DMD eigenvalues are eigenvectors and eigenvalues of the system matrix $A$, respectively.
\end{corollary}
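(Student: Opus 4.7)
The plan is to derive this corollary as an immediate consequence of \cref{4:Proposition_SDMD_A_Characterization}. The proposition establishes a chain of equivalences between four statements about each DMD mode $\vartheta_i$ and its associated DMD eigenvalue $\lambda_i$, and the hypothesis of the corollary is precisely designed to trigger the first disjunct in condition $(iv)$.

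Concretely, I would fix an arbitrary index $i \in \{1,\dots,m\}$ and observe that the assumption $x_m \in \langle x_0,\dots,x_{m-1}\rangle$ makes the disjunction in condition $(iv)$ trivially true, independently of whether $\langle e_m, V\Sigma^{-1}v_i\rangle$ vanishes. Invoking the equivalence $(iv) \iff (i)$ from \cref{4:Proposition_SDMD_A_Characterization} then yields that $\vartheta_i$ is an eigenvector of the system matrix $A$ with eigenvalue $\lambda_i$. Since $i$ was arbitrary, this applies to all DMD modes and DMD eigenvalues simultaneously.

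I do not foresee any real obstacle, since all of the work has been done in the preceding proposition; the corollary is essentially a re-packaging of the $(iv) \implies (i)$ direction under a cleaner hypothesis. The only thing to note for clarity is that the hypothesis $x_m \in \langle x_0,\dots,x_{m-1}\rangle$ is equivalent to $q = x_m - Xc = 0$, which is the form in which the disjunct $(iv)$ was originally reformulated in the proof of the proposition, so the application is completely direct.
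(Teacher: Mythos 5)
Your proposal is correct and matches the paper's intent exactly: the paper presents this corollary as ``a trivial consequence of the above proposition,'' i.e., the hypothesis $x_m \in \langle x_0,\dots,x_{m-1}\rangle$ satisfies the first disjunct of condition $(iv)$ in \cref{4:Proposition_SDMD_A_Characterization}, and the equivalence $(iv) \iff (i)$ yields the claim for each mode. Nothing further is needed.
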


The previous statement implies $\sigma(S) \subseteq \sigma(A)$. However, the other inclusion is also true for all non-zero eigenvalues of the system matrix $A$~\cite{tu:2014:on_dmd_theorey_and_app}.

\begin{proposition} \label{4:Propostion_AllEigenvaluesCalculated}
Let the system matrix $A$ as well as DMD eigenvalues $\lambda_1,\dots,\lambda_m$ and DMD modes $\vartheta_1,\dots,\vartheta_m$ be given by \cref{4:Algorithmus_SDMD} to the data $x_0,\dotso,x_m \in \mathbb{C}^n$. Then all non-zero eigenvalues $\lambda \neq 0$ of $A$ will be calculated by DMD, i.e., it holds
\begin{equation*}
\sigma(A) \setminus \{0\} \subseteq \sigma(S).
\end{equation*}
\end{proposition}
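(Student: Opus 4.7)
The plan is to show that every nonzero eigenvalue of $A$ is inherited by $S$ via the compression $w \mapsto U^* w$. Given $\lambda \in \sigma(A) \setminus \{0\}$ together with an eigenvector $w \neq 0$ of $A$ for $\lambda$, I would propose $\tilde v := U^* w \in \mathbb{C}^r$ as a candidate eigenvector of $S$ for the same eigenvalue. Since $U^*$ has $r$ columns (where $r = \rank(X)$) and $S$ is precisely the compression $U^* A U$ written via the identity $S = U^* Y V \Sigma^{-1}$, this is the natural object to test.

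The first step is a short direct computation verifying $S \tilde v = \lambda \tilde v$. The key ingredient is the identity $X^+ = V \Sigma^{-1} U^*$ from the reduced SVD $X = U \Sigma V^*$. Using it, I would chain
\begin{equation*}
S \tilde v = U^* Y V \Sigma^{-1} U^* w = U^* Y X^+ w = U^* A w = \lambda U^* w = \lambda \tilde v,
\end{equation*}
which does not yet require the hypothesis $\lambda \neq 0$.

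The main (and really only) obstacle is excluding the degenerate case $\tilde v = 0$, since otherwise the preceding calculation is vacuous. Here the assumption $\lambda \neq 0$ enters essentially: if $U^* w = 0$, then $X^+ w = V \Sigma^{-1} U^* w = 0$ and consequently $A w = Y X^+ w = 0$, so $\lambda w = 0$, contradicting $\lambda \neq 0$ and $w \neq 0$. This contrapositive step finishes the argument and yields $\lambda \in \sigma(S)$, establishing the claimed inclusion. No deeper spectral theory is needed; the statement reduces to linear algebra built directly on the reduced SVD and the definition of $A$.
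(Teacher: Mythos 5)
Your proposal is correct and follows essentially the same route as the paper's own proof: the same candidate eigenvector $U^*w$, the same computation $S(U^*w)=U^*YV\Sigma^{-1}U^*w=U^*YX^+w=U^*Aw=\lambda U^*w$, and the same contradiction argument showing $U^*w\neq 0$ because otherwise $Aw=YX^+w=0$ would force $\lambda=0$. No differences worth noting.
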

\begin{proof}
For an arbitrary eigenvector $z$ of $A$ to the eigenvalue $\lambda \neq 0$, we obtain by defining the vector $v:= U^*z$:
\begin{equation*}
S v = U^* Y V \Sigma^{-1} U^* z = U^* Y X^+ z = U^* A_\varphi z = U^* \lambda z = \lambda v.
\end{equation*}
Assume $v = 0$. Then $U^*z=0$ and we obtain
\begin{equation*}
0 
= Y V \Sigma^{-1} U^* z 
= Y X^+ z
= A z
= \lambda z.
\end{equation*}
As $z \neq 0$, it follows $\lambda = 0$, which contradicts the assumptions.
\end{proof}

Consequently, under appropriate assumptions of the data, we obtain the spectral-theoretic relation
\begin{equation*}
\sigma(A) \setminus \{0\} \subseteq \sigma(S) \subseteq \sigma(A).
\end{equation*}
However, the condition $x_m \in \langle x_0,\dots,x_{m-1}\rangle$ in \cref{4:Corollary_SDMD_A_Characterization} is actually never satisfied in the case of $n \gg m$ and, hence, not practically applicable. 

This raises the question, whether the DMD modes can be modified such that we obtain eigenvectors of the system matrix without any assumptions.
A solution to this problem is presented in the next theorem, which is inspired by the assertion $(iii)$ in \cref{4:Proposition_SDMD_A_Characterization}.

\begin{theorem} \label{4:Theorem_Modification_Modes}
Let the system matrix $A$ as well as DMD eigenvalues $\lambda_1,\dots,\lambda_m$ and DMD modes $\vartheta_1,\dots,\vartheta_m$ be given by \cref{4:Algorithmus_SDMD} to the data $x_0,\dotso,x_m \in \mathbb{C}^n$. In addition, let the error $q = x_m - Xc$ with  $c = X^+ x_m$ be given. For $\lambda_i \neq 0$
\begin{equation*}
\zeta_i = \vartheta_i + \frac{1}{\lambda_i} \langle e_m, X^+ \vartheta_i \rangle \cdot q
\end{equation*} 
is an eigenvector of the system matrix $A$ to the eigenvalue $\lambda_i$.
\end{theorem}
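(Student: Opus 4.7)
The plan is to compute $A\zeta_i$ directly, using two structural facts that are already established in the excerpt: (a) $\vartheta_i$ fails to be an eigenvector of $A$ only up to a very explicit residual term, and (b) the residual vector $q$ itself lies in the kernel of $A$. The shape of the correction $\frac{1}{\lambda_i}\langle e_m, X^+\vartheta_i\rangle\cdot q$ is suggested precisely by the obstruction $(iii)$ of \cref{4:Proposition_SDMD_A_Characterization}, so the guess that $\zeta_i$ works is natural.

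More concretely, I would start from the companion identity $Y = XC_c + q e_m^T$ and right-multiply by $X^+$ to get $A = XC_cX^+ + q e_m^T X^+$. By \cref{3:Lemma_C_C_c_A}(3) the first summand equals $P_X A$, so $A = P_X A + q e_m^T X^+$. Applying this to $\vartheta_i$ and invoking \cref{4:Remark_EigenvalueEquation_with_Projection} (which gives $P_X A \vartheta_i = \lambda_i \vartheta_i$), I obtain
\begin{equation*}
A \vartheta_i = \lambda_i \vartheta_i + \langle e_m, X^+ \vartheta_i\rangle\, q.
\end{equation*}
Combining this with $A q = 0$ from \cref{3:Lemma_C_C_c_A}(1) yields
\begin{equation*}
A \zeta_i = A\vartheta_i + \frac{1}{\lambda_i}\langle e_m, X^+\vartheta_i\rangle\, A q = \lambda_i \vartheta_i + \langle e_m, X^+\vartheta_i\rangle\, q = \lambda_i \zeta_i,
\end{equation*}
which is the desired eigenvalue equation.

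It remains to check that $\zeta_i$ is genuinely nonzero, so that ``eigenvector'' is justified. Here I would use that $\vartheta_i = Uv_i$ lies in $\operatorname{im}(U)=\operatorname{im}(X)$ and is nonzero (as $U$ has orthonormal columns and $v_i\neq 0$), whereas $X^+q = 0$ (again \cref{3:Lemma_C_C_c_A}(1)) forces $U^*q=0$, i.e.\ $q\in\operatorname{im}(X)^\perp$. Consequently $\vartheta_i$ and the correction term are orthogonal, and the projection of $\zeta_i$ onto $\operatorname{im}(X)$ equals $\vartheta_i\neq 0$, so $\zeta_i\neq 0$.

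I do not expect a serious obstacle: once the decomposition $A = P_X A + q e_m^T X^+$ is written down, everything else is a one-line calculation. The only subtle point is making explicit why the choice of scalar in front of $q$ must be $\frac{1}{\lambda_i}\langle e_m, X^+\vartheta_i\rangle$, and this is dictated unambiguously by matching the ``error'' produced by $A\vartheta_i-\lambda_i\vartheta_i$ against the annihilation property $Aq=0$.
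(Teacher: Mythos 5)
Your proof is correct and follows essentially the same route as the paper: both expand $A\vartheta_i = YX^+\vartheta_i = (XC_c + qe_m^T)X^+\vartheta_i$, then invoke $P_XA\vartheta_i = \lambda_i\vartheta_i$ from \cref{4:Remark_EigenvalueEquation_with_Projection} and $Aq=0$ from \cref{3:Lemma_C_C_c_A} to get $A\zeta_i=\lambda_i\zeta_i$. Your non-degeneracy argument (that $q\perp\im(X)$ while $\vartheta_i=Uv_i\in\im(X)$ is nonzero, so $P_X\zeta_i=\vartheta_i\neq 0$) is a minor, equally valid variant of the paper's, which instead applies $X^+$ to $\zeta_i=0$ to deduce $X^+\vartheta_i=0$ and hence $\lambda_i v_i = Sv_i=0$, contradicting $\lambda_i\neq 0$.
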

\begin{proof} By \cref{3:Lemma_C_C_c_A} and \cref{4:Remark_EigenvalueEquation_with_Projection}, we obtain the eigenvalue equation by
\begin{align*}
A \zeta_i 
&= A \vartheta_i + \frac{1}{\lambda_i} \langle e_m, X^+ \vartheta_i \rangle \cdot A q 
= A \vartheta_i
= Y X^+ \vartheta_i 
= [XC_c + q e_m^T] X^+ \vartheta_i \\
&= XC_cX^+ \vartheta_i + q e_m^T X^+ \vartheta_i 
= P_X A \vartheta_i + \langle e_m, X^+ \vartheta_i \rangle \cdot q 
= \lambda_i \vartheta_i + \langle e_m, X^+ \vartheta_i \rangle \cdot q 
= \lambda_i \zeta_i.
\end{align*}
Assume $\zeta = 0$. Then we obtain
\begin{equation*}
0 = X^+ \zeta = X^+ (\vartheta_i + \frac{1}{\lambda_i} \langle e_m, X^+ \vartheta_i \rangle \cdot q) = X^+ \vartheta_i,
\end{equation*}
and therefore
\begin{equation*}
0 
= U^* Y X^+ \vartheta_i 
= U^* Y V \Sigma^{-1} U^* U v_i
= S v_i
= \lambda v_i.
\end{equation*}
Since $v_i \neq 0$, it follows $\lambda = 0$, which contradicts the assumption.
\end{proof}

The previous proposition characterizes exactly the spectral-theoretic relation between the DMD matrix $S$ and the system matrix $A$. 
In fact, it holds $\sigma(A) \setminus \{0\} \subseteq \sigma(S) \subseteq \sigma(A)$ (without any assumption) and thus
\begin{equation*}
\sigma(A) \setminus \{0\} = \sigma(S) \setminus \{0\}.
\end{equation*}
Consequently, the dynamic behavior of the system matrix $A$ will be completely captured by the low-dimensional DMD matrix $S$. 
However, the DMD modes have to be modified according to \cref{4:Theorem_Modification_Modes} in order to get eigenvectors of the system matrix.
This motivates the formulation of a new variant of DMD with modified DMD modes and possibly new DMD amplitudes such that the reconstruction property is preserved. 
In particular, as \cref{4:Theorem_Modification_Modes} states, only the non-zero eigenvalues can be used.
Nevertheless, these eigenvalues are sufficient to capture the temporal evolution and consequently no dynamical information is lost.

\begin{remark} \label{4:Remark_Modification_Modes_Adjustment}
For the formulation of an efficient algorithm, we need a more compact representation of the modified DMD modes $\zeta_i$, which arises directly from transformations of the eigenvectors of $S$. By \cref{4:Remark_EigenvalueEquation_with_Projection} and \cref{3:Lemma_C_C_c_A}, we rearrange the modified DMD modes by
\begin{align*}
\zeta_i 
&= \vartheta_i + \frac{1}{\lambda_i} \langle e_m, X^+ \vartheta_i \rangle \cdot q 
= \frac{1}{\lambda_i} \lambda_i \vartheta_i + \frac{1}{\lambda_i} q e_m^T X^+ \vartheta_i 
=  \frac{1}{\lambda_i} P_X A \vartheta_i + \frac{1}{\lambda_i} q e_m^T X^+ \vartheta_i \\
&=  \frac{1}{\lambda_i} X C_c X^+ \vartheta_i + \frac{1}{\lambda_i} q e_m^T X^+ \vartheta_i 
=  \frac{1}{\lambda_i} (X C_c + q e_m^T) X^+ \vartheta_i 
= \frac{1}{\lambda_i} Y X^+ \vartheta_i 
= \frac{1}{\lambda_i} Y V \Sigma^{-1} v_i.
\end{align*}
\end{remark}

Defining the DMD modes in this way, we obtain the most modern version of DMD, called Exact-Dynamic Mode Decomposition (EXDMD).
In the following section, we will introduce this variant of DMD.

\section{Exact-Dynamic Mode Decomposition (EXDMD)}
In 2014, Tu et. al.~\cite{tu:2014:on_dmd_theorey_and_app} presented the most modern version of DMD, called Exact-Dynamic Mode Decomposition (EXDMD).
However, the algorithm presented here differs from the standard literature as we use a different definition of the DMD amplitudes.
In addition, we introduce a novel relevant variable: The error scaling.

\begin{algorithm}[Exact-Dynamic Mode Decomposition] \leavevmode\newline 
Input: Data $x_0,\dotso,x_m \in \mathbb{C}^n$.
\label{5:Algorithmus_EXDMD} \\
Output: DMD eigenvalues $\lambda_1,\dotso,\lambda_{r_0} \in \mathbb{C}$, DMD modes $\vartheta_1,\dotso,\vartheta_{r_0} \in \mathbb{C}^n$, DMD amplitudes $a_1,\dots,a_{r_0} \in \mathbb{C}$, and the error scaling $a_0$.
\begin{enumerate}
\item Define the matrices $X:=(x_0,\dotso,x_{m-1}), Y:=(x_1,\dotso,x_m) \in \mathbb{C}^{n \times m}$.
\item Compute the reduced singular value decomposition $X=U\Sigma V^*$ with $r = \rank(X)$.
\item Define the DMD matrix $S:=U^*YV\Sigma^{-1} \in \mathbb{C}^{r \times r}$.
\item Compute the DMD eigenvalues $\lambda_i$ and eigenvectors $v_i$ of $S$.
\item Calculate the DMD modes $\vartheta_i= \frac{1}{\lambda_i} Y V \Sigma^{-1} v_i \in \mathbb{C}^n$ and define $\Theta = (\vartheta_1,\dots,\vartheta_{r_0}) \mathbb{C}^{n \times {r_0}}$.
\item Compute the DMD amplitudes  $a=\Lambda^{-1} \Theta^+ x_1 \in \mathbb{C}^r$ with $a=(a_1,\dots,a_{r_0})^T$.
\item Calculate the error scaling $a_0 = -\sum_{j=1}^{r_0} \frac{1}{\lambda_j} \prod_{\substack{k=1 \\ k \neq i}}^{r_0} \frac{1}{\lambda_j - \lambda_k} \in \mathbb{C}$, if it exists. 
\end{enumerate} 
\end{algorithm}

The introduction of the error scaling $a_0$ and the fundamental change of the definition of the DMD amplitude will be justified by the subsequent theorem, which proves the reconstruction property of EXDMD. 
The reconstruction property of EXDMD, however, differs from the previous ones of CDMD (see \cref{3:Thoerem_Reconst_CDMD}) and SDMD (see \cref{4:Thoerem_Reconst_SDMD}). 
The reason for this is the spectral-theoretic relation of EXDMD to the system matrix, as we will examine in the proof.

\begin{lemma} \label{5:Lemma_kerXkerY_kerXneq0}
Let data $x_0,\dotso,x_m \in \mathbb{C}^n$ with associated matrices $X = \begin{bmatrix} x_0 & \dots & x_{m-1} \end{bmatrix}$ and $Y = \begin{bmatrix} x_1 & \dots & x_{m} \end{bmatrix}$ be given. 
If $ker(X) \subseteq ker(Y)$ and $\ker(X) \neq \{0\}$, then $x_m \in \im(X)$.
In particular, it holds $\im(X) \subseteq \im(Y)$.
\begin{proof}
Since the kernel of $X$ is non-trivial, there exist a vector $v=(v_1,\dotso,v_m)^T \neq 0 \in \mathbb{C}^m$ with $Xv = 0$. Since $\ker(X) \subseteq \ker(Y)$, we obtain $Yv = 0$.

1. case: $v_m \neq 0$. Then the equation $v_1  x_1 + \dots v_m  x_m = 0$ implies
\begin{equation*}
x_m = -\frac{1}{v_m}( v_1 x_1 + \dotso +  v_{m-1} x_{m-1}) 
= -\frac{v_1}{v_m} x_1 - \dotso -\frac{v_{m-1}}{v_m} x_{m-1},
\end{equation*}
and hence the assertion is proven.

2. case: $v_m = 0$. Then we obtain
\begin{equation*}
0 = Y \begin{pmatrix} v_1 \\ \vdots \\ v_{m-1} \\ 0 \end{pmatrix} 
= X \begin{pmatrix} 0 \\ v_1 \\ \vdots \\ v_{m-1} \end{pmatrix} .
\end{equation*}
By assumption $\ker(X) \subseteq \ker(Y)$, we obtain
\begin{equation*}
X \begin{pmatrix} 0 \\ v_1 \\ \vdots \\ v_{m-1} \end{pmatrix} = 0
\implies
Y \begin{pmatrix} 0 \\ v_1 \\ \vdots \\ v_{m-1} \end{pmatrix} = 0.
\end{equation*}
If $v_{m-1} \neq 0$, then we get a representation of $x_m$ analogous to the firstcase.
Otherwise, we repeat this steps as long as an entry $v_{j} \neq 0$ appears.
As $v$ is non-zero, there exists such an entry $v_{j_0} \neq 0$.
\end{proof}
\end{lemma}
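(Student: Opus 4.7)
The strategy is to exploit the non-triviality of $\ker(X)$ together with the shift structure between the columns of $X$ and $Y$. First I would pick a non-zero $v = (v_1, \dots, v_m)^T \in \ker(X)$; by the hypothesis this vector lies in $\ker(Y)$ as well, so component-wise the two identities
\[
\sum_{i=1}^{m} v_i \, x_{i-1} = 0 \qquad \text{and} \qquad \sum_{i=1}^{m} v_i \, x_i = 0
\]
both hold. If the last coordinate $v_m$ happens to be non-zero, the second identity directly expresses $x_m$ as a linear combination of $x_1, \dots, x_{m-1}$, all of which are columns of $X$, so $x_m \in \im(X)$ is obtained at once.

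The substantive case is therefore $v_m = 0$. Here the plan is to introduce a shift trick. Let $j = \max\{i : v_i \neq 0\}$ (necessarily $j < m$) and define the shifted coefficient vector $v' = (0, v_1, \dots, v_{m-1})^T$. A short calculation shows $Xv' = \sum_{i=1}^{m-1} v_i x_i$, which coincides with $Yv = 0$ because $v_m = 0$. Hence $v' \in \ker(X)$, and by the hypothesis $Yv' = 0$ too; explicitly, this reads $\sum_{i=1}^{m-1} v_i x_{i+1} = 0$. The key observation is that the rightmost non-zero coefficient of $v'$ now sits at position $j+1$ rather than $j$. Iterating this construction $m - j$ times produces a non-trivial relation whose rightmost non-zero coefficient is at position $m$, at which point the first case applies and $x_m \in \im(X)$ follows. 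The ``in particular'' statement is then immediate: since $x_1, \dots, x_{m-1}$ appear both as columns of $X$ and of $Y$, the inclusion $\im(Y) = \langle x_1, \dots, x_m \rangle \subseteq \langle x_0, \dots, x_{m-1} \rangle = \im(X)$ follows from what has just been established.

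The main obstacle I foresee is purely bookkeeping rather than conceptual: one must verify at each iteration that the shifted vector genuinely lies in $\ker(X)$ (which relies on the trailing zeros of $v$ killing the column $x_{m-1}$ in $Xv'$) and that the procedure terminates after exactly $m - j$ steps. Phrasing the argument as a finite downward induction on $m - j$ --- or, equivalently, on the distance from the rightmost non-zero coordinate to position $m$ --- should make the formalities routine.
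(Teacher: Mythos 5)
Your argument for the main claim $x_m \in \im(X)$ is correct and is essentially the paper's own proof: pick a nonzero $v \in \ker(X) \subseteq \ker(Y)$, solve for $x_m$ directly if $v_m \neq 0$, and otherwise shift the coefficients down one position (the shifted vector lies in $\ker(X)$ because $Xv' = Yv = 0$) until the rightmost nonzero entry reaches position $m$; your downward induction on $m-j$ merely makes the paper's ``repeat this step'' explicit. One caveat concerns the ``in particular'' clause: from $x_m \in \im(X)$ you correctly deduce $\im(Y) = \langle x_1,\dots,x_m\rangle \subseteq \langle x_0,\dots,x_{m-1}\rangle = \im(X)$, but the lemma as stated asserts the opposite inclusion $\im(X) \subseteq \im(Y)$, which does not follow from $x_m \in \im(X)$ and is false in general (take $x_0 \neq 0$ and $x_1 = \dots = x_m = 0$, so that $\ker(X) \subseteq \ker(Y) = \mathbb{C}^m$ yet $\im(X) \not\subseteq \im(Y) = \{0\}$). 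The inclusion you prove is the one that genuinely follows, and the paper's own proof is silent on this clause, so the discrepancy lies in the statement rather than in your reasoning; still, you should not label $\im(Y)\subseteq\im(X)$ as a proof of the displayed claim $\im(X)\subseteq\im(Y)$ without flagging that the stated direction appears to be an error.
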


\begin{theorem}[Reconstruction property EXDMD] \label{5:Theorem_Reconst-EXDMD}
Let DMD eigenvalues $\lambda_1,\dots,\lambda_r$, DMD amplitudes $a_1,\dots,a_r$, DMD modes $\vartheta_1,\dots,\vartheta_r$, and the error scaling $a_0$ be given by \cref{5:Algorithmus_EXDMD} to the data $x_0,\dotso,x_m \in \mathbb{C}^n$.
If $\ker(X) \subseteq \ker(Y)$ and the DMD eigenvalues are distinct, then the following identities hold:
\begin{equation*}
x_0 = \sum_{j=1}^r {a_j \vartheta_j} + q_0,
\qquad
x_k = \sum_{j=1}^r {\lambda_j^k a_j \vartheta_j},
\end{equation*}
for $k = 1,\dotso,m$ and a vector $q_0$ with
\begin{equation*}
\begin{cases}
q_0 = a_0 \cdot q & r = m \\
q_0 = 0 & r \neq m
\end{cases}
\end{equation*}
where $q = x_m - Xc$ and $c = X^+ x_m$.
\end{theorem}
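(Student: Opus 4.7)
The plan is to leverage the spectral identification of the EXDMD modes as eigenvectors of the system matrix $A$: by \cref{4:Theorem_Modification_Modes} together with the compact form derived in \cref{4:Remark_Modification_Modes_Adjustment}, each mode $\vartheta_i = \tfrac{1}{\lambda_i} Y V \Sigma^{-1} v_i$ satisfies $A\vartheta_i = \lambda_i \vartheta_i$. Distinctness of the DMD eigenvalues makes $\Theta = \begin{bmatrix}\vartheta_1 & \cdots & \vartheta_r\end{bmatrix}$ of full column rank, so $\Theta^{+}\Theta = I_r$ and $A\Theta = \Theta\Lambda$. The hypothesis $\ker(X) \subseteq \ker(Y)$ combined with \cref{2:Corollary_AX=Y} yields $AX = Y$, hence $x_k = A^k x_0$ for every $k \geq 1$.

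For $k \geq 1$, I first establish $x_1 \in \im(\Theta)$. The identity $\Theta\Lambda = Y V \Sigma^{-1} W^{-1}$ is immediate from Step~5 of \cref{5:Algorithmus_EXDMD}, and a short argument shows $\im(\Theta) = \im(Y)$ (using that $\ker(X) \subseteq \ker(Y)$ forces $\rank(YV) = \rank(Y)$). Thus $x_1 \in \im(Y) = \im(\Theta)$, and since $a = \Lambda^{-1} \Theta^{+} x_1$, we have $\Theta\Lambda a = \Theta\Theta^{+} x_1 = x_1$. Multiplying by $A^{k-1}$ and using $A\Theta = \Theta\Lambda$ propagates this to $x_k = \Theta\Lambda^k a = \sum_{j=1}^r \lambda_j^k a_j \vartheta_j$ for every $k \geq 1$.

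The remaining task is to analyze $q_0 := x_0 - \Theta a$. Applying $A$ immediately gives $A q_0 = x_1 - \Theta\Lambda a = 0$, so $q_0 \in \ker(A)$. The two cases diverge sharply. For $r < m$, \cref{5:Lemma_kerXkerY_kerXneq0} forces $q = 0$, whence $Y = X C_c$ and $\Theta = U W^{-1}$; the key observation is that $C_c$ preserves $\ker(V^*) = \ker(X)$ (since $X C_c = Y$ and $\ker(X) \subseteq \ker(Y)$), which translates to $S^{-1} \Sigma V^* C_c = \Sigma V^*$ and collapses $\Theta a$ exactly to $x_0$, giving $q_0 = 0$. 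For $r = m$, the matrix $V$ is unitary; substituting $Y = X C_c + q e_m^T$ and $X = U \Sigma V^*$ into the defining relation for $\Theta$ yields $\Theta = U W^{-1} + q e_m^T V \Sigma^{-1} W^{-1} \Lambda^{-1}$, and a short computation using $a = (V \Sigma^{-1} W^{-1})^{-1} e_1$ reduces $q_0$ to $-(C_c^{-1})_{m,1} \cdot q$.

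The main obstacle is identifying the scalar $-(C_c^{-1})_{m,1}$ with the Lagrange-type formula of Step~7 of \cref{5:Algorithmus_EXDMD}. I would handle this in two independent substeps. First, for the companion matrix $C_c$ with characteristic polynomial $\prod_{j=1}^m(\lambda - \lambda_j) = \lambda^m - c_{m-1}\lambda^{m-1} - \dots - c_0$, solving $C_c v = e_1$ directly from the companion structure gives $(C_c^{-1})_{m,1} = 1/c_0$. Second, the partial-fraction decomposition of $\tfrac{1}{\lambda \prod_j (\lambda - \lambda_j)}$ combined with its vanishing at infinity produces the residue identity $1/c_0 = \sum_{j=1}^m \tfrac{1}{\lambda_j \prod_{k \neq j}(\lambda_j - \lambda_k)}$. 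Assembling these yields the claimed formula for $a_0$ and closes the proof.
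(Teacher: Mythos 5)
Your proposal is correct, and while it starts from the same spectral foundation (\cref{4:Theorem_Modification_Modes}, \cref{4:Remark_Modification_Modes_Adjustment}, and $AX=Y$ via \cref{2:Corollary_AX=Y}), it diverges from the paper's proof in two substantive ways. First, the paper routes the identities $x_k=\sum_j\lambda_j^k a_j\vartheta_j$ through \cref{2:Theorem_Reconst_System_matrix_A}, verifying its hypotheses ($\rank(Y)=r$, diagonalizability with distinct non-zero eigenvalues) and then matching the abstract scaling factors $\alpha_j$ against $a=\Lambda^{-1}\Theta^+x_1$; you instead verify the identities directly from $A\Theta=\Theta\Lambda$, the inclusion $x_1\in\im(\Theta)=\im(YV)=\im(Y)$, and propagation by powers of $A$ --- more self-contained, at the cost of not reusing the earlier machinery. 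Second, and more interestingly, both proofs reduce the $r=m$ case to the scalar $\langle e_m, C_c^{-1}e_1\rangle=(C_c^{-1})_{m,1}$, but the paper evaluates it by diagonalizing $C_c$ with the Vandermonde matrix and invoking an explicit LU factorization of the inverse transposed Vandermonde matrix (cited from external literature), whereas you read $(C_c^{-1})_{m,1}=1/c_0$ directly off the companion structure and convert $1/c_0=(-1)^{m+1}\bigl(\prod_j\lambda_j\bigr)^{-1}$ into the Lagrange-type sum via the residue identity for $1/\bigl(\lambda\prod_j(\lambda-\lambda_j)\bigr)$. Your derivation is shorter, avoids the external reference, and makes explicit why the non-vanishing of all $\lambda_j$ (equivalently the invertibility of $C_c$, which the paper asserts without justification) is needed. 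For $r\neq m$ the paper argues $q_0\in\im(A)\cap\ker(A)=\{0\}$ from diagonalizability, while you obtain $q=0$ from \cref{5:Lemma_kerXkerY_kerXneq0} and collapse $\Theta a$ to $x_0$ algebraically using $\Theta=UW^{-1}$ and $S=U^*AU$; both are valid. Your argument shares the paper's implicit assumptions that all DMD eigenvalues returned by \cref{5:Algorithmus_EXDMD} are non-zero and that $r_0=r$, so no additional gap is introduced.
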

\begin{proof} By \cref{4:Theorem_Modification_Modes} and \cref{4:Remark_Modification_Modes_Adjustment}, we obtain that the DMD eigenvalues $\lambda_i$ and modes $\vartheta_i$ are eigenvalues and eigenvectors of the system matrix $A$ for $i = 1,\dots,m$.
Since $\ker(X) \subseteq \ker(Y)$, we obtain the relation
\begin{equation*}
\rank(Y) = m - \ker(Y) \leq m - \ker(X) = \rank(X) = r. 
\end{equation*}
In addition, the condition $\im(A) \subseteq \im(Y)$ implies $\rank(A) \leq \rank(Y)$ and consequently $r \leq \rank(Y)$, because $A$ has $r$ non-zeros distinct eigenvalues.
Hence, $\rank(Y) = r$ and the system matrix $A$ is diagonalizable by $r$ non-zero distinct eigenvalues.
Therefore, the assumptions of \cref{2:Theorem_Reconst_System_matrix_A} are satisfied such that there exist scaling factors $\alpha_1,\dotso,\alpha_r \in \mathbb{C}$ and an error $q_0 \in \mathbb{C}^n$ such that
\begin{equation*} 
x_0 = \sum_{j=1}^{r} \alpha_j v_j + q_0,
\qquad
x_k = \sum_{j=1}^{r} \lambda_j^k \alpha_j v_j,
\end{equation*}
for $k = 1,\dots,m$. Using the notation of \cref{2:Theorem_Reconst_System_matrix_A}, we obtain the equation
\begin{equation*}
Y = \Theta K \Lambda \Vand(\lambda_1,\dots,\lambda_r;1,m),
\end{equation*}
where $K=\text{diag}(\alpha_1,\dotso,\alpha_m)$ is the scaling matrix.
Since the DMD modes (which are eigenvectors of the system matrix) are related to distinct eigenvalues, they are linearly independent and, consequently, the following calculation 
\begin{align*}
a = \Lambda^{-1} \Theta^+ x_1 &= \Lambda^{-1} \Theta^+ Y e_1 \\
&= \Lambda^{-1} \Theta^+ \Theta K \Lambda \Vand(\lambda_1,\dots,\lambda_r;1,m)  e_1 \\
&= K \Vand(\lambda_1,\dots,\lambda_r;1,m) e_1 \\
&= (\alpha_1,\dots,\alpha_m)^T
\end{align*}
shows that the scaling factors $\alpha_k$ coincide with the DMD amplitude $a_k$. 
Hence, the two identities are proven. For the additional statement, we consider two cases:

1. case: $r = m$. Consider $Y = XC_c + qe_m^T$ and rearrange this equation by using the inverse of $C_c$ (which exists since $r=m$):
\begin{equation*}
X = (Y - q e_m^T) C_c^{-1} = Y C_c^{-1} - q e_m^T C_c^{-1}.
\end{equation*}
By \cref{4:Lemma_CS_similar} and relation $C_c = (\Sigma V^*) S (\Sigma V^*)^{-1}$, we obtain the identity
\begin{equation*}
Y C_c^{-1}
= Y V \Sigma^{-1} S^{-1} \Sigma V^*
= \Theta \Lambda W S^{-1} \Sigma V^*
= \Theta W \Sigma V^* 
= \Theta W \Sigma V^* M^{-1} M
= \Theta K M,
\end{equation*}
and, therefore, it follows
\begin{equation*}
x_0 = X e_1 
= (Y C^{-1} - q e_m^T C_c^{-1})e_1
= \Theta K M e_1 - q e_m^T C_c^{-1}e_1
= \sum_{j=1}^m {a_j \vartheta_j} - \langle e_m, C_c^{-1} e_1 \rangle \cdot q.
\end{equation*}
Now, we examine the remaining term $\langle e_m , C_c^{-1} e_1 \rangle \cdot q$.
To this end, we represent the companion matrix $C_c$ by its eigenvectors and eigenvalues
\begin{align*}
e_m^T C^{-1} e_1 
&= e_m^T \Vand(\lambda_1,\dots,\lambda_m)^{-1} \Lambda^{-1} \Vand(\lambda_1,\dots,\lambda_m) e_1 \\
&= ((\Vand(\lambda_1,\dots,\lambda_m)^T)^{-1} e_m)^T \Lambda^{-1} \Vand(\lambda_1,\dots,\lambda_m) e_1.
\end{align*} 
The inverse of the transposed Vandermonde matrix $\Vand(\lambda_1,\dots,\lambda_m)^T$ is  given by a LU-decomposition~\cite{moya:2012:inverse}, i.e., there exist a lower triangle matrix $L$ and a upper triangle matrix $U$ with
\begin{equation*}
(\Vand(\lambda_1,\dots\lambda_m)^T)^{-1} = (LU)^{-1} = U^{-1} L^{-1}.
\end{equation*}
More precisely, these matrices are given by
\begin{equation*}
U^{-1}_{i,j} = \begin{cases}
0 	&i>j\\
\prod_{\substack{k=1 \\ k \neq i}}^{j} \frac{1}{\lambda_i - \lambda_k}	&i\leq j
\end{cases}
\end{equation*}
and
\begin{equation*}
L^{-1}_{i,j} = \begin{cases}
0		& i < j \\
1		& i = j \\
L_{i-1,j-1} - L_{i-1,j} \cdot \lambda_{i-1}		& i = 2,\dots,m, \quad j = 2,\dots,i-1.
\end{cases}
\end{equation*}
Thus we obtain
\begin{equation*}
(\Vand(\lambda_1,\dots,\lambda_m)^T)^{-1} e_m 
= U^{-1} L^{-1} e_m 
= U^{-1} e_m
= \begin{pmatrix}
\prod_{\substack{k=1 \\ k \neq i}}^{m} \frac{1}{\lambda_1 - \lambda_k} \\
\vdots \\
\prod_{\substack{k=1 \\ k \neq i}}^{m} \frac{1}{\lambda_m - \lambda_k}
\end{pmatrix}
\end{equation*}
and finally
\begin{align*}
e_m^T C^{-1} e_1
&= ((\Vand(\lambda_1,\dots,\lambda_m^T)^{-1} e_m)^T \Lambda^{-1} \Vand(\lambda_1,\dots,\lambda_m e_1 \\
&= \frac{1}{\lambda_1} \prod_{\substack{k=1 \\ k \neq i}}^{m} \frac{1}{\lambda_1 - \lambda_k} + \dots +
\frac{1}{\lambda_m} \prod_{\substack{k=1 \\ k \neq i}}^{m} \frac{1}{\lambda_m - \lambda_k}.
\end{align*}
This factor equals the error scaling $a_0$, which completes the proof for the first case.

2. case: $r \neq m$. Hence, $\ker(X)$ is non-trivial, i.e., $\ker(X) \neq \{0\}$. By \cref{5:Lemma_kerXkerY_kerXneq0}, we obtain $\im(X) \subseteq \im(Y)$ and, therefore, it holds
\begin{equation*}
\im(X) = \im(Y) = \im(A) = \langle \vartheta_1,\dots,\vartheta_r \rangle,
\end{equation*}
because $\im(A) \subseteq \im(Y)$.
Hence, $x_0$ is in the span of $\vartheta_1,\dots,\vartheta_r$ and therefore $q_0 \in \im(A)$, especially.
However, by the proof of \cref{2:Theorem_Reconst_System_matrix_A}, we obtain that the vector $q_0 \in \ker(A)$. 
Consequently, we obtain $q_0 = 0$.
\end{proof}

\cref{5:Theorem_Reconst-EXDMD} clearly demonstrates the functionality of EXDMD.
Via a reduced singular value decomposition, the dynamicly relevant properties of the system matrix were extracted, i.e., the eigenvectors to non-zero eigenvalues.
As a result, the eigenvectors (or DMD modes) do not not generate a basis anymore and consequently the first snapshot $x_0$ will be reconstructed with an error.
However, \cref{5:Theorem_Reconst-EXDMD} gives us an exact representation of the error.
The subsequent corollary illustrates the connection between the DMD amplitudes and the coefficients of the first snapshot in the eigenvector basis of the system matrix.

\begin{corollary}
Let the system matrix $A$ as well as DMD eigenvalues $\lambda_1,\dots,\lambda_r$, DMD modes $\vartheta_1,\dots,\vartheta_r$, DMD amplitudes $a_1,\dots,a_r$ be given by \cref{5:Algorithmus_EXDMD} to the data $x_0, \allowbreak\dotso, \allowbreak x_m \allowbreak \in \mathbb{C}^n$. In addition, let the eigenvalues $\lambda_1,\dots,\lambda_r,\lambda_{r+1},\dots,\lambda_n$ and eigenvectors $\vartheta_1, \allowbreak \dots, \allowbreak\vartheta_r, \allowbreak v_{r+1}, \allowbreak\dots, \allowbreak v_n$ of the system matrix as well as the coefficient vector $b = (b_1,\dots,b_n)^T = W^{-1} x_0$ with $W = (\vartheta_1,\dots,\vartheta_r,v_{r+1},\dots,v_m)$ be given. If $\ker(X) \subseteq \ker(Y)$, $\rank(Y) = r$, and the (DMD) eigenvalues $\lambda_1,\dots,\lambda_r$ are distinct, then the DMD amplitudes coincide with the coefficients:
\begin{equation*}
a_j = b_j
\end{equation*}
for $j=1,\dots,r$.
\end{corollary}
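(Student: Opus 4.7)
The plan is to compare two representations of each snapshot $x_k$ (for $k \geq 1$): one coming from Theorem 5.4 (the reconstruction property of EXDMD) and one coming from expanding $x_0$ in the eigenbasis of $A$ and applying $A^k$. If both sums share the same eigenvector basis, then uniqueness of coefficients forces $a_j = b_j$. So the task reduces to showing that the $\vartheta_j$ used in both representations are the same eigenvectors of $A$, and that the only surviving terms under $A^k$ for $k \geq 1$ are those indexed by $j = 1,\dots,r$.

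First I would verify that $A^k x_0 = x_k$ for $k = 0,1,\dots,m$: this follows from Corollary 2.3 applied to the hypothesis $\ker(X) \subseteq \ker(Y)$, which yields $AX = Y$. Next, I would invoke Theorem 4.10 (together with Remark 4.11, matching the EXDMD mode formula in step 5 of Algorithm 5.1) to conclude that each $\vartheta_j$ produced by EXDMD is an eigenvector of $A$ with eigenvalue $\lambda_j \neq 0$. Together with $\rank(Y) = r$ and the distinctness assumption, the assumptions of Theorem 2.7 are met with $r_1 = r$, so the non-zero eigenvalues of $A$ are exactly $\lambda_1,\dots,\lambda_r$, and the remaining eigenvalues $\lambda_{r+1},\dots,\lambda_n$ in $W$ are all zero.

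Expanding $x_0 = W b = \sum_{j=1}^r b_j \vartheta_j + \sum_{j=r+1}^n b_j v_j$ and applying $A^k$ for $k \geq 1$, the zero-eigenvalue terms vanish and I obtain
\begin{equation*}
x_k = A^k x_0 = \sum_{j=1}^r \lambda_j^k b_j \vartheta_j, \qquad k = 1,\dots,m.
\end{equation*}
By Theorem 5.4 the same snapshots satisfy $x_k = \sum_{j=1}^r \lambda_j^k a_j \vartheta_j$. Subtracting the two representations yields
\begin{equation*}
\sum_{j=1}^r \lambda_j^k (b_j - a_j) \vartheta_j = 0
\end{equation*}
for every $k = 1,\dots,m$. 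Taking $k = 1$ and using that eigenvectors of $A$ for distinct eigenvalues are linearly independent, every coefficient $\lambda_j(b_j - a_j)$ vanishes; since $\lambda_j \neq 0$, this forces $b_j = a_j$ for $j = 1,\dots,r$.

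I do not anticipate a serious obstacle: the work is entirely bookkeeping, matching the eigenvectors produced by EXDMD with those appearing in the basis $W$. The only point requiring care is confirming that $\lambda_{r+1},\dots,\lambda_n = 0$, which makes the higher-index terms drop out of $A^k x_0$ for $k \geq 1$; this is handled by invoking Theorem 2.7 as above. Once that is secured, the identification $a_j = b_j$ is immediate from the linear independence of the DMD modes associated to distinct non-zero eigenvalues.
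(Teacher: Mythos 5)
Your proposal is correct and follows essentially the same route as the paper: both arguments equate the reconstruction of $x_1$ (you state it for general $k$ and then specialize to $k=1$) obtained from \cref{5:Theorem_Reconst-EXDMD} with the expansion coming from \cref{2:Theorem_Reconst_System_matrix_A}, and then conclude via linear independence of eigenvectors to distinct eigenvalues together with $\lambda_j \neq 0$. Your version merely spells out more explicitly why the zero-eigenvalue terms drop out and why the EXDMD modes are genuine eigenvectors of $A$, which the paper leaves implicit.
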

\begin{proof} The assumptions of \cref{5:Theorem_Reconst-EXDMD} and \cref{2:Theorem_Reconst_System_matrix_A} are satisfied, because the system matrix $A$ is diagonalizable by the $r$ distinct eigenvalues. Hence we receive
\begin{equation*}
\sum_{j=1}^r \lambda_j a_j \vartheta_j 
= x_1 
= \sum_{j=1}^r \lambda_j b_j \vartheta_j.
\end{equation*}
Since the eigenvectors (to distinct eigenvalues) are linear independent and the eigenvalues are non-zero, the coefficients have to match.
\end{proof}

\section{Conclusion}
A comprehensive theoretical analysis of Dynamic Mode Decomposition has been developed that clarify the connection between different variants of DMD (CDMD, SDMD, and EXDMD) and demonstrates several features of them.
One of these features is the reconstruction property, which was proven for all variants and the system matrix as well.
To this end, different scaling factors were used and new ones introduced to ensure this property.
Especially for EXDMD, it was shown that under appropriate conditions the algorithm calculates the dynamically relevant, high-dimensional structures of the system matrix with the help of low-dimensional, spectral-theoretical techniques.
The new findings facilitate the application with DMD since precise reconstructions are obtained which lead to a clearer decomposition of the data into DMD eigenvalues, modes and amplitudes.

\section*{Acknowledgment}
The first author would like to thank Rainer Nagel for ideas, helpful suggestions and inspiring discussions.
Moreover, he expresses his gratitude to the entire working group functional analysis (AGFA) from the University of T{\"u}bingen for the numerous discussions and the support.

This work is partly supported by ``Kooperatives Promotionskolleg Digital Media'' at Hochschule der Medien and the University of Stuttgart.

\bibliographystyle{plain}
\bibliography{bib}

\end{document}